\documentclass[12pt]{amsart}

\title{Levi-Civita connections on quantum spheres}
\date{13 Feb 2022}
\author{Joakim Arnlind, Kwalombota Ilwale and Giovanni Landi}

\usepackage{amsmath}
\usepackage{amsthm}
\usepackage{amsfonts}
\usepackage{dsfont}
\usepackage{enumitem}
\usepackage{color}
\usepackage[margin=25mm]{geometry}

\newcommand{\Yp}{Y_{+}}

\newcommand{\Ym}{Y_{-}}
\newcommand{\Yz}{Y_{z}}
\newcommand{\ot}{\otimes}
\newcommand{\nn}{\nonumber}


\newcommand{\complex}{\mathbb{C}}
\newcommand{\naturals}{\mathbb{N}}
\newcommand{\integers}{\mathbb{Z}}


\newcommand{\bracketb}[1]{\Big[#1\Big]}

\newcommand{\angles}[1]{\left\langle #1 \right\rangle}

\newcommand{\para}[1]{\left(#1\right)}
\newcommand{\paraa}[1]{\big(#1\big)}
\newcommand{\parab}[1]{\Big(#1\Big)}


\newcommand{\spacearound}[1]{\quad#1\quad}
\newcommand{\equivalent}{\spacearound{\Leftrightarrow}}
\renewcommand{\implies}{\spacearound{\Rightarrow}}
\newcommand{\qtext}[1]{\quad\text{#1}\quad}

\newcommand{\qand}{\qtext{and}}

\newtheorem{theorem}{Theorem}[section]

\newtheorem{lemma}[theorem]{Lemma}
\newtheorem{proposition}[theorem]{Proposition}

\theoremstyle{definition}
\newtheorem{definition}[theorem]{Definition}
\theoremstyle{remark}
\newtheorem{remark}[theorem]{Remark}
\numberwithin{equation}{section}


\renewcommand{\mid}{\mathds{1}}
\newcommand{\Xp}{X_{+}}
\newcommand{\Xpm}{X_{\pm}}
\newcommand{\Xmp}{X_{\mp}}
\newcommand{\Xm}{X_{-}}
\newcommand{\Xz}{X_{z}}
\newcommand{\omegap}{\omega_{+}}
\newcommand{\omegam}{\omega_{-}}
\newcommand{\omegaz}{\omega_{z}}
\newcommand{\Sthreeq}{S^{3}_{q}}
\newcommand{\Stq}{\Sthreeq}

\newcommand{\TSthreeq}{T\Sthreeq}
\newcommand{\TStq}{\TSthreeq}
\newcommand{\Stwoq}{S^{2}_{q}}
\newcommand{\Uqsu}{\mathcal{U}_q(\textrm{su}(2))}

\newcommand{\A}{\mathcal{A}}

\newcommand{\lt}{\triangleright}
\newcommand{\rt}{\triangleleft}
\newcommand{\one}[1]{#1_{(1)}}
\newcommand{\two}[1]{#1_{(2)}}

\newcommand{\nablat}{\widetilde{\nabla}}
\newcommand{\nablap}{\nabla_{+}}
\newcommand{\nablam}{\nabla_{-}}

\newcommand{\nablaz}{\nabla_{z}}
\newcommand{\nzero}{\nabla^0}

\newcommand{\OmegaStq}{\Omega^1(\Sthreeq)}

\renewcommand{\d}{\partial}

\newcommand{\rtr}{\triangleright}
\newcommand{\ltr}{\triangleleft}

\newcommand{\sigmap}{\sigma_{+}}
\newcommand{\sigmam}{\sigma_{-}}
\newcommand{\sigmaz}{\sigma_{z}}

\newcommand{\Bp}{B_+}
\newcommand{\Bm}{B_-}
\newcommand{\Bz}{B_0}
\renewcommand{\L}{\mathcal{L}}
\newcommand{\eh}{\hat{e}}

\newcommand{\thalf}{\tfrac{1}{2}}

\newcommand{\g}{\mathfrak{g}}
\newcommand{\G}[3]{\Gamma_{#1,#2#3}}
\newcommand{\rhot}{\tilde{\rho}}
\newcommand{\Aeq}[1]{\gamma_{+#1}^\ast-q^2\gamma_{#1-} = A_{#1}}
\newcommand{\Beq}[1]{q^2\rhot_{#1-}-q^{-2}\gamma_{z#1}^\ast=B_{#1}}
\newcommand{\Ceq}[1]{q^2\gamma_{#1z}-q^{-2}\rhot_{#1+}=C_{#1}}

\address[Joakim Arnlind]{Dept. of Math.\\
Link\"oping University\\
581 83 Link\"oping\\
Sweden}
\email{joakim.arnlind@liu.se}

\address[Kwalombota Ilwale]{Dept. of Math.\\
Link\"oping University\\
581 83 Link\"oping\\
Sweden}
\email{kwalombota.ilwale@liu.se}

\address[Giovanni Landi]
{Mathematics, Universit\`a di Trieste \\
34127  Trieste, Italy 
\newline \indent and Institute for Geometry and Physics (IGAP) Trieste, Italy 
\newline \indent and INFN, Trieste, Italy
}
\email{landi@units.it}

\subjclass[2000]{}
\keywords{}

\begin{document}

\begin{abstract}
  We introduce $q$-deformed connections
  on the quantum 2-sphere and 3-sphere, satisfying a twisted Leibniz
  rule in analogy with $q$-deformed derivations. We show that such
  connections always exist on projective modules.  Furthermore, a
  condition for metric compatibility is introduced, and an explicit
  formula is given, parametrizing all metric connections on a free
  module. On the quantum 3-sphere, a
  $q$-deformed torsion freeness condition is introduced and we derive
  explicit expressions for the Christoffel symbols of a Levi-Civita
  connection for a general class of metrics. We also give metric connections on a
  class of projective modules over the quantum 2-sphere. 
  Finally, we outline a generalization to any Hopf algebra with a 
  (left) covariant calculus and associated quantum tangent space.

\end{abstract}

\maketitle

\tableofcontents
\parskip = .25 ex

\section{Introduction}

\noindent
In recent years, a lot of progress has been made in understanding 
Riemannian aspects of noncommutative geometry. These are not only
mathematically interesting, but also important in physics where
noncommutative geometry is expected to play a key role, notably in a
theory of quantum gravity. In Riemannian geometry the Levi-Civita
connection and its curvature have a central role, and it turns
out that there are several different ways of approaching these objects
in the noncommutative setting (see e.g.
\cite{cff:gravityncgeometry,dvmmm:onCurvature,m:nc.spin.q-sphere,ac:ncgravitysolutions,bm:starCompatibleConnections,r:leviCivita,aw:curvature.three.sphere,bgm:levi-civita.class.spectral.triples,bgl2020}).

From an algebraic perspective, the set of vector fields and the set of
differential forms are (finitely generated projective) modules over
the algebra of functions, a viewpoint which is also adopted in
noncommutative geometry. However, considering vector fields as
derivations does not immediately carry over to noncommutative
geometry, since the set of derivations of a (noncommutative) algebra
is in general not a module over the algebra but only a module over the
center of the algebra. Therefore, one is led naturally to focus on
differential forms and define a connection on a general module as
taking values in the tensor product of the module with the module of
differential forms. More precisely, let $M$ be a (right) $\A$-module
and let $\Omega^1(\A)$ denote a module of differential forms together
with a differential $d:\A\to\Omega^1(\A)$. A connection on $M$ is a
linear map $\nabla:M\to M\otimes\Omega^1(\A)$ satisfying a version of
Leibniz rule 
\begin{align}\label{eq:diff.form.Leibniz}
  \nabla(mf) = (\nabla m)f + m\otimes df
\end{align}
for $f\in\A$ and $m\in M$. In differential geometry, for a vector
field $X$ one obtains a covariant derivative $\nabla_X:M\to M$, by
pairing differential forms with $X$ (as differential forms are
dual to vector fields). In a noncommutative version of the above,
there is in general no canonical way of obtaining a ``covariant
derivative'' $\nabla_X:M\to M$. In a derivation based approach to
noncommutative geometry (see
e.g. \cite{dv:calculDifferentiel,dvmmm:onCurvature}), one puts
emphasis on the choice of a Lie algebra $\g$ of derivations of the
algebra $\A$. Given a (right) $\A$-module $M$ one defines a connection
as a map $\nabla:\g\times M\to M$, usually writing
$\nabla(\d,m) = \nabla_\d m$ for $\d\in\g$ and $m\in M$, satisfying
\begin{align*}
\nabla_{\d}(mf) = (\nabla_{\d}m)f + m\,\d(f)
\end{align*}
for $f\in\A$ and $m\in M$, in parallel with \eqref{eq:diff.form.Leibniz}.

For quantum groups, it turns out that natural analogues of vector
fields are not quite derivations, but rather maps satisfying a
twisted Leibniz rule. For instance, as we shall see, for the quantum
3-sphere $\Stq$ one defines maps $X_a:\Stq\to\Stq$ satisfying
\begin{align}\label{eq:Xp.def.Leibniz}
  X_a(fg) = X_a(f) \sigma_a(g) + f X_a(g) 
\end{align}
for $f,g\in\Stq$, and $\sigma_a:\Stq\to\Stq$, for $a=1,2,3$, are
algebra morphisms.  In this note we explore the possibility of
introducing a corresponding $q$-affine connection on a (right)
$\Stq$-module $M$. Motivated by \eqref{eq:Xp.def.Leibniz} we
introduce a covariant derivative $\nabla_{X_a}:M\to M$ such that
\begin{align*}
  \nabla_{X_a}(mf) = (\nabla_{X_a}m)\sigma_a(f) + m X_a(f)
\end{align*}
for $f\in\Stq$ and $m\in M$. In the following, we make these ideas
precise and prove that there exist $q$-affine connections on
projective modules. Furthermore, we introduce a condition for metric
compatibility, and in the particular case of a left covariant calculus
over $\Stq$, we investigate a derivation based definition of torsion.
Then we explicitly construct a Levi-Civita connection, that is a
torsion free and metric compatible connection. Moreover, we construct
metric connections on a class of projective modules over the quantum
2-sphere. We mention that the Riemannian geometry of quantum spheres
was studied \cite{bm:starCompatibleConnections} from the point of view
of a bimodule connection on differential forms satisfying
\eqref{eq:diff.form.Leibniz} as well as a right Leibniz rule twisted
by a braiding map.  In a final section we sketch a way to generalise
(some of) the constructions of the present paper to any Hopf algebra
with a (left) covariant differential calculus and corresponding
quantum tangent space of twisted derivations.

The present paper is an alternative and extended version of the paper
\cite{ail:q.deformed.LC} where the left module structure of
differential forms was used to construct $q$-affine connections,
rather than the right module structure considered in the following.

\section{The quantum 3-sphere}

\noindent
In this section we recall a few basic properties of the quantum
3-sphere \cite{w:twisted.su2}. The algebra $\Sthreeq$ is a unital
$\ast$-algebra generated by $a,a^\ast,c,c^\ast$ fulfilling
\begin{alignat*}{3}
&ac = qca &\qquad & c^*a^* =qa^*c^* &\qquad &ac^* =qc^*a \\
&ca^* =qa^*c & &cc^* = c^*c & &a^*a+c^*c =
aa^*+q^{2}cc^* =\mid 
\end{alignat*}
for a real parameter $q$. The identification of $\Sthreeq$ with the
quantum group $SU_q(2)$ is via the Hopf algebra structure given by
\begin{alignat*}{2}
  &\Delta(a) = a\otimes a-qc^\ast\otimes c &\qquad
  &\Delta(c) = c\otimes a+a^\ast\otimes c\\
  &\Delta(a^\ast) = -qc\otimes c^\ast+a^\ast\otimes a^\ast &\qquad
  &\Delta(c^\ast) = a\otimes c^\ast+c^\ast\otimes a^\ast
\end{alignat*}
with antipode and counit
\begin{alignat*}{4}
  &S(a) = a^\ast &\qquad
  &S(c) = -qc &\qquad
  & \epsilon(a)=1 &\qquad
  & \epsilon(c)=0 \\
  &S(a^\ast) = a &\qquad
  &S(c^\ast) = -q^{-1}c^\ast &\qquad
  & \epsilon(a^\ast)=1 &\qquad
  & \epsilon(c^\ast)=0.
\end{alignat*}
Furthermore, the quantum enveloping algebra $\Uqsu$ is the $\ast$-algebra with generators
$E,F,K,K^{-1}$ satisfying
\begin{align*}
  &K^{\pm 1}E = q^{\pm 1}EK^{\pm 1}\qquad
    K^{\pm 1}F = q^{\mp 1}FK^{\pm 1}\qquad
  [E,F] = \frac{K^2-K^{-2}}{q-q^{-1}} .
\end{align*}
The corresponding Hopf algebra structure is given by the coproduct, 
\begin{align*}
  \Delta(E) = E\otimes K + K^{-1}\otimes E \qquad
  \Delta(F) = F\otimes K + K^{-1}\otimes F
  \qquad \Delta(K^{\pm 1}) = K^{\pm 1}\otimes K^{\pm 1}
\end{align*}
together with antipode and counit
\begin{alignat*}{3}
  &S(K) = K^{-1} &\qquad &S(E) = -qE &\qquad &S(F) = -q^{-1}F   \\
  &\epsilon(K) = 1 & &\epsilon(E) = 0 & &\epsilon(F) = 0.
\end{alignat*}
We recall that there is a unique bilinear pairing between $\Uqsu$ and
$\Sthreeq$ given by
\begin{alignat*}{2}
&\angles{K^{\pm 1}, a} = q^{\mp\, 1/2} &\qquad 
&\angles{K^{\pm 1}, a^*} = q^{\mp\, 1/2}\\
&\angles{E,c} = 1 &  &\angles{F,c^*} = -q^{-1}, 
\end{alignat*}
with the remaining pairings being zero. This 
induces a $\Uqsu$-bimodule structure on $\Sthreeq$: 
\begin{align}\label{actions}
  h\lt f = \one{f}\angles{h,\two{f}}\qand
  f\rt h = \angles{h,\one{f}}\two{f}
\end{align}
for $h\in\Uqsu$ and $f\in\Sthreeq$, with Sweedler's notation
$\Delta(f)=\one{f}\otimes\two{f}$ (and implicit sum).  The
$\ast$-structure on $\Uqsu$, denoted here by $\dag$
(to distinguish it from the $*$-structure of the algebra), is given by
$(K^{\pm 1})^\dag = K^{\pm 1}$ and $E^\dag = F$.  The action of
$\Uqsu$ is compatible with the $\ast$-algebra structures in the
following sense
\begin{align}\label{staractions}
  h\triangleright f^\ast = \paraa{S(h)^\dag\triangleright f}^\ast\qquad
  f^\ast\triangleleft h = \paraa{f\triangleleft S(h)^\dag}^\ast.
\end{align}
Let us for convenience list the left and right actions of the
generators:
\begin{alignat*}{2}
  &K^{\pm 1}\triangleright a^{n}  = q^{\mp\frac{n}{2}}\,a^{n} &\qquad 
  &K^{\pm 1}\triangleright c^{n}  = q^{\mp\frac{n}{2}}\,c^{n}\\
  &K^{\pm 1}\triangleright a^{\ast}\,^{n} = q^{\pm\frac{n}{2}}(a^{\ast})^{n}&\qquad
  &K^{\pm 1}\triangleright c^{\ast}\,^{n} = q^{\pm\frac{n}{2}}(c^{\ast})^{n}\\
  &E\triangleright a^{n}  = -q^{(3-n)/2} [n]a^{n-1}c^{\ast} &\qquad
  &E\triangleright c^{n}  = q^{(1-n)/2}[n]c^{n-1}a^\ast\\
  &E\triangleright (a^{\ast})^{n}  = 0 &\qquad
  &E\triangleright (c^{\ast})^{n}  = 0.\\
  &F\triangleright a^{n}  = 0 &\qquad
  &F\triangleright c^{n}  = 0\\
  &F\triangleright (a^{\ast})^{n}  = q^{(1-n)/2}[n]c(a^{\ast})^{ n-1} &\qquad 
  &F\triangleright (c^{\ast})^{n}  = -q^{-(1+n)/2}[n]a(c^{\ast})^{n-1}
\end{alignat*}
and 
\begin{alignat*}{2}
  &a^n\ltr K^{\pm 1} = q^{\mp \frac{n}{2}}a^n &\quad
  &(a^\ast)^n\ltr K^{\pm 1} = q^{\pm \frac{n}{2}}(a^\ast)^n\\
  &c^n\ltr K^{\pm 1} = q^{\pm \frac{n}{2}}c^n &\quad
  &(c^\ast)^n\ltr K^{\pm 1} = q^{\mp \frac{n}{2}}(c^\ast)^n\\
  &a^n\ltr F = q^{\frac{n-1}{2}}[n]ca^{n-1} &\quad
  &(a^\ast)^n\ltr F = 0\\
  &c^n\ltr F = 0 &\quad
  &(c^\ast)^n\ltr F = -q^{\frac{n-3}{2}}[n]a^\ast(c^\ast)^{n-1}\\
  &a^n\ltr E = 0 &\quad
  &(a^\ast)^n\ltr E = -q^{\frac{n-3}{2}}[n]c^\ast (a^\ast)^{n-1}\\
  &c^n\ltr E = q^{\frac{n-1}{2}}[n]c^{n-1}a &\qquad
  &(c^\ast)^n\ltr E = 0
\end{alignat*}
where $[n] = (q^n-q^{-n})/(q-q^{-1})$.

\subsection{The covariant calculus and the quantum tangent space}\label{sec:left.cov.calculus}

It is well known \cite{w:twisted.su2} that there is a left covariant (first order)
differential calculus on $\Sthreeq$, denoted by $\OmegaStq$, generated
as a left $\Stq$-module by
\begin{align*}
  \omega_1=\omegap = a\,dc-qc\,da\qquad
  \omega_2=\omegam = c^\ast da^\ast-qa^\ast dc^\ast\qquad
  \omega_3=\omegaz = a^\ast da + c^\ast dc .
\end{align*}
In fact, $\OmegaStq$ is a free left module with a
basis given by $\{\omegap,\omegam,\omegaz\}$. Moreover, $\OmegaStq$ is
a bimodule with respect to the relations
\begin{alignat*}{4}
  &\omega_{z}a = q^{-2}a\omega_{z} &\qquad
  &\omega_{z}a^* = q^{2}a^*\omega_{z} &\qquad 
  &\omega_{z}c = q^{-2}c\omega_{z} &\qquad
  &\omega_{z}c^* = q^{2}c^*\omega_{z} \\
  &\omega_{\pm} a = q^{-1}a\omega_{\pm} &\qquad
  &\omega_{\pm}a^* = q a^*\omega_{\pm}&\qquad
  &\omega_{\pm}c = q^{-1}c\omega_{\pm} &\qquad 
  &\omega_{\pm}c^* = qc^*\omega_{\pm},
\end{alignat*}
and, furthermore, $\OmegaStq$ is a $\ast$-bimodule with
\begin{align*}
  &\omegap^\dag= -\omegam\qquad\omegaz^\dag=-\omegaz
\end{align*}
satisfying $(f\omega g)^\dagger=g^\ast\omega^\dag f^\ast$ for
$f,g\in\Sthreeq$ and $\omega\in\OmegaStq$.

The differential $d:\Sthreeq\to\OmegaStq$ is computed using a dual basis $\{\Xp,\Xm,\Xz\}$ of twisted derivations (the corresponding quantum tangent space \cite[\S 14.1.2]{KS97}), 
\begin{align}\label{df3}
  df = (\Xp\rtr f)\omegap + (\Xm\rtr f)\omegam + (\Xz\rtr f)\omegaz , \quad f\in\Sthreeq , 
\end{align}
with explicitly,  
\begin{align*}
\Xp = \sqrt{q}EK\qquad
\Xm  = \frac{1}{\sqrt{q}}FK\qquad
\Xz = \frac{1-K^4}{1-q^{-2}} .
\end{align*}
Their twisted derivation properties are easily worked out. For $f,g\in\Sthreeq$ one has,
\begin{align*}
  X_a \rtr fg = f(X_a \rtr g) + (X_a\rtr f)(\sigma_a \rtr g) ,
 \end{align*}
$a=\pm, z$ (and similarly for the right action), with
\begin{align*}
\sigmap=\sigmam=K^2\quad\text{and}\quad \sigmaz=K^4. 
\end{align*} 
Furthermore, these maps satisfy the following $q$-deformed commutation relations
\begin{align}
  &\Xm\Xp - q^2\Xp\Xm = \Xz\label {eq:Xmp.com} \\
  q^2&\Xz\Xm-q^{-2}\Xm\Xz=(1+q^2) \Xm\label{eq:Xzm.com}\\
  q^2&\Xp\Xz-q^{-2}\Xz\Xp=(1+q^2)\Xp.\label{eq:Xzp.com}
\end{align}
As for the $*$-structures, one checks that $\Xpm^\dagger = \Xmp$ and
$K^\dagger=K$. From this, using \eqref{staractions} one computes, for
$f \in \Stq$, that
\begin{equation}\label{actonstar}
  \begin{split}
    \Xpm \lt f^* &= -(K^{-2} \Xmp \lt f)^* = - K^{2} \lt (\Xmp \lt f)^*  \\
    \Xz\lt f^* &= -(K^{-4} \Xz \lt f)^* = - K^{4} \lt (\Xz \lt f)^* \, .    
  \end{split}
\end{equation}

\section{$q$-affine connections}\label{sec:q.affine.connections}

\noindent
In differential geometry, a connection extends the action of
derivatives to vector fields, and for $\Stq$ a natural set of
($q$-deformed) derivations is given by
$\{X_a\}_{a=1}^3=\{\Xp,\Xm,\Xz\}$. In this section, we will introduce
a framework extending the action of $X_a$ to a connection on
$\Stq$-modules. Let us first define the set of $q$-deformed derivations we shall be
interested in.

\begin{definition}
  The quantum tangent space of $\Stq$ is defined as
  \begin{align*}
    \TStq = \complex\angles{\Xp,\Xm,\Xz},
  \end{align*}
  that is the complex vector space generated by $X_a$ for $a=\pm, z$.
\end{definition}

\noindent
Considering $\TStq$ to be the analogue of a (complexified) tangent space of $\Stq$,
we would like to introduce a covariant derivative $\nabla_X$ on a (right)
$\Stq$-module $M$, for $X\in\TStq$. Since the basis elements of
$\TStq$ act as $q$-deformed derivations, the connection should obey an
analogous $q$-deformed Leibniz rule.  The motivating example is when
$M=\Stq$ and the action of $\TStq$ is simply $\nabla_{X}f = X\lt(f) = X(f)$ for
$X\in\TStq$ and $f\in\Stq$. (To lighten notation, in the following we shall drop the symbol $\lt$ for the left action when there is no risk of ambiguities.)

\noindent
In fact, let us be slightly more general
and consider the action on a free module of rank $n$. Thus, we let $M$
be a free right $\Stq$-module with basis $\{e_i\}_{i=1}^n$, and write
an arbitrary element $m\in M$ as $m=e_im^i$ for $m^i\in\Stq$,
implicitly assuming a summation over $i$ from $1$ to $n$.

Let us define $\nzero:\TStq\times M\to M$ by setting
\begin{align}\label{eq:nzero.def}
  \nzero_{X_a}(m) = e_iX_a(m^i)
\end{align}
for $m=e_im^i\in M$ (and extending it linearly to all of $\TStq$).  Now, it is easy to check that
\begin{align*}
  &\nzero_{X_a}(mf) = (\nzero_{X_a}m)\sigma_a(f) + X_a(f)m
\end{align*}
for $f\in\Stq$ and $m\in M$.  Let us generalize these concepts to
arbitrary right $\Stq$-modules.

\begin{definition}\label{def:q.affine.connection}
  Let $M$ be a right $\Stq$-module. A \emph{right $q$-affine connection on
    $M$} is a map $\nabla:\TStq\times M\to M $ such that
  \begin{enumerate}
  \item $\nabla_{X}(\lambda_1m_1+\lambda_2m_2) = \lambda_1\nabla_Xm_1 + \lambda_2\nabla_Xm_2$,\label{q.affine.lin.module}
  \item $\nabla_{\lambda_1 X+\lambda_2Y}m = \lambda_1\nabla_Xm + \lambda_2\nabla_Ym$,\label{q.affine.lin.tangent}
  \item $\nabla_{X_a}(mf) = (\nabla_{X_a}m)\sigma_a(f)+mX_a(f)$, \quad ($a=\pm, z$),\label{q.affine.Xa.leibniz}
  \end{enumerate}
for $m,m_1,m_2\in M$, $f\in\Stq$, $X\in\TStq$ and
  $\lambda_1,\lambda_2\in\complex$.
\end{definition}

\begin{definition}
  A \emph{hermitian form on a right $\Stq$-module $M$} is
  a map $h:M\times M\to \Stq$ such that
  \begin{align*}
    &h(m_1,m_2f)=h(m_1,m_2)f\qquad  h(m_1,m_2)^\ast = h(m_2,m_1) ,\\
    &h(m_1+m_2,m_3) = h(m_1,m_3)+h(m_2,m_3)
  \end{align*} 
  for $f\in\Stq$ and $m_1,m_2,m_3\in M$. Moreover, $h$ is said to be 
  \emph{invertible} if the induced map $\hat{h}:M\to M^\ast$, defined
  by $\hat{h}(m_1)(m_2)=h(m_1,m_2)$, is bijective.
\end{definition}

\noindent
On a free module with basis $\{e_i\}_{i=1}^n$, a hermitian form is
given by $h_{ij}=h_{ji}^\ast\in \Stq$ by setting
\begin{align*}
  h(m_1,m_2) = (m_1^i)^\ast h_{ij}m_2^j
\end{align*}
for $m_1=e_im_1^i\in(\Stq)^n$ and $m_2=e_im_2^i\in(\Stq)^n$. Moreover,
if $h$ is invertible, then there exist $h^{ij}\in\Stq$ such that
$h^{ij}h_{jk}=\delta^i_k\mid$. In case the module is projective (but
not necessarily free) and generated by $\{e_i\}_{i=1}^n$, one can find
$h^{ij}\in \Stq$ such that $e_ih^{ij}h_{jk}=e_k$ if the hermitian form
is invertible (see e.g. \cite{a:levi.civita.minimal.surfaces}).

Next, we will introduce a notion of compatibility between a $q$-affine connection and a hermitian form.
To motivate Definition~\ref{def:metric.compatibility} below, let us study the case of free modules.
For the $q$-affine connection $\nzero$ in
\eqref{eq:nzero.def}, one finds that
\begin{align*}
  \Xp\paraa{
  h(m_1,m_2)}
  &= \Xp\paraa{(m_1^i)^\ast h_{ij}m_2^j} \\
  &= (m_1^i)^\ast \Xp(h_{ij}m_2^j) + \Xp\paraa{(m_1^i)^\ast}K^2(h_{ij}m_2^j)\\
  &= (m_1^i)^\ast h_{ij}\Xp(m_2^j) + (m_1^i)^\ast \Xp(h_{ij})K^2(m_2^j)+\Xp\paraa{(m_1^i)^\ast}K^2(h_{ij}m_2^j).
\end{align*}
For the connection $\nabla^0$, a natural requirement for the
compatibility with $h$ is to demand that $\Xp(h_{ij})=0$. 
Then,  from \eqref{actonstar} $\Xp(f^\ast)=-(K^{-2}\Xm(f))^\ast = -K^{2}(\Xm(f))^\ast$, and one has,
\begin{align*}
  \Xp\paraa{h(m_1,m_2)}
  &= (m_1^i)^\ast h_{ij}\Xp(m_2^j) +\Xp\paraa{(m_1^i)^\ast}K^2(h_{ij}m_2^j)\\
  &=(m_1^i)^\ast h_{ij}\Xp(m_2^j) - \paraa{K^{-2}\Xm(m_1)}^\ast K^2(h_{ij}m_2^j)\\
  &= (m_1^i)^\ast h_{ij}\Xp(m_2^j) - K^2\paraa{\Xm(m_1)^\ast} K^2(h_{ij}m_2^j)\\
  &= (m_1^i)^\ast h_{ij}\Xp(m_2^j) - K^2\paraa{\Xm(m_1)^\ast h_{ij}m_2^j}\\
  &= h\paraa{m_1,\nzero_{\Xp}m_2}-K^2\paraa{h(\nzero_{\Xm}m_1,m_2)} .
\end{align*}
Corresponding formulas are easily worked out for
$\nzero_{\Xm},\nzero_{\Xz}$, and we shall take this as a motivation
for the following definition.
\begin{definition}\label{def:metric.compatibility}
  A $q$-affine connection $\nabla$ on a right $\Stq$-module $M$ is
  compatible with the hermitian form $h:M\times M\to\Stq$ if
  \begin{align}
    &\Xp\paraa{h(m_1,m_2)}
      = h\paraa{m_1,\nabla_{\Xp}m_2}-K^2\paraa{h(\nabla_{\Xm}m_1,m_2)}\label{eq:metric.comp.p}\\
    &\Xm\paraa{h(m_1,m_2)}
      = h\paraa{m_1,\nabla_{\Xm}m_2}-K^2\paraa{h(\nabla_{\Xp}m_1,m_2)}\label{eq:metric.comp.m}\\
    &\Xz\paraa{h(m_1,m_2)}
      =h\paraa{m_1,\nabla_{\Xz}m_2}-K^4\paraa{h(\nabla_{\Xz}m_1,m_2)},\label{eq:metric.comp.z}
  \end{align}
  for $m_1,m_2\in M$.
\end{definition}

\noindent
Note that \eqref{eq:metric.comp.p} and \eqref{eq:metric.comp.m} are equivalent since
\begin{align*}
  \parab{
  \Xp&\paraa{h(m_2,m_1)}
      -h(m_2,\nabla_{\Xp}m_1)+K^2\paraa{h(\nabla_{\Xm}m_2,m_1)}
  }^\ast\\
     &=-K^{-2}\parab{
       \Xm\paraa{h(m_1,m_2)}+K^2\paraa{h(\nabla_{\Xp}m_1,m_2)}-h(m_1,\nabla_{\Xm}m_2)
       }.
\end{align*}
In the case of a $q$-affine connection on a free module, one can
derive a convenient parametrization of all connections that are
compatible with a given hermitian form. To this end, let us introduce
some notation. Let $(\Stq)^n$ be a free right $\Stq$-module with basis
$\{e_i\}_{i=1}^n$.  A $q$-affine connection $\nabla$ on $(\Stq)^n$ can
be determined by specifying the Christoffel symbols
\begin{align*}
  \nabla_{X_a}e_i = e_j\Gamma_{ai}^j,
\end{align*}
with $\Gamma_{ai}^j\in\Stq$ for $a=\pm, z$ and $i,j=1,\ldots,n$, and
setting
\begin{align*}
  &\nabla_{X_a}(e_im^i) = (\nabla_{X_a}e_i)\sigma_a(m^i) + e_iX_a(m^i)
    = e_j\paraa{\Gamma^j_{ai}\sigma_a(m^i) + X_a(m^j)}.
\end{align*}
The next result gives the form of the Christoffel symbols for a
$q$-affine connection compatible with an invertible hermitian form on
a free module.

\begin{proposition}\label{prop:nabla.comp.metric}
  Let $(\Stq)^n$ be a free right $\Stq$-module with a basis
  $\{e_i\}_{i=1}^n$ and let $\nabla$ be a $q$-affine connection on
  $(\Stq)^n$ given by the Christoffel symbols
  $\nabla_ae_i = e_j\Gamma_{ai}^j$. Furthermore, assume that $h$ is an
  invertible hermitian form on $(\Stq)^n$ and set
  $h_{ij}=h(e_i,e_j)$. Then $\nabla$ is compatible with $h$ if and
  only if there exist $\gamma_{ij},\rho_{ij}\in\Stq$ such that $\rho_{ij}^\ast=\rho_{ji}$ and
  \begin{align}
    &\Gamma^i_{+j} = h^{ik}\paraa{\thalf\Xp(h_{kj})+K(\gamma_{kj})}\label{eq:christoffel.metric.p}\\
    &\Gamma^i_{-j} = h^{ik}\paraa{\thalf\Xm(h_{kj})+K(\gamma_{jk}^\ast)}\label{eq:christoffel.metric.m}\\
    &\Gamma^i_{zj} = h^{ik}\paraa{\thalf\Xz(h_{kj})+K^2(\rho_{kj})}\label{eq:christoffel.metric.z}.
  \end{align}
\end{proposition}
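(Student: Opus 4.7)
The plan is to rewrite the three metric compatibility equations as relations among the Christoffel symbols and then solve them. Using $h(m_1,m_2)^{\ast}=h(m_2,m_1)$ together with right $\Stq$-linearity in the second slot, one sees that $h$ is semilinear in the first slot: $h(mf,m')=f^{\ast}h(m,m')$. Hence, evaluating \eqref{eq:metric.comp.p} and \eqref{eq:metric.comp.z} on pairs $(e_i,e_j)$ and using $h(e_i,\nabla_{X_a}e_j)=h_{ik}\Gamma^{k}_{aj}$ together with $h(\nabla_{X_a}e_i,e_j)=(\Gamma^{k}_{ai})^{\ast}h_{kj}$ yields
\begin{align*}
h_{ik}\Gamma^{k}_{+j}&=\Xp(h_{ij})+K^{2}\paraa{(\Gamma^{k}_{-i})^{\ast}h_{kj}},\\
h_{ik}\Gamma^{k}_{zj}&=\Xz(h_{ij})+K^{4}\paraa{(\Gamma^{k}_{zi})^{\ast}h_{kj}},
\end{align*}
with the $\Xm$-condition being equivalent to the first by the remark above.

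For the ``only if'' direction, I would \emph{define}
\begin{align*}
K(\gamma_{ij}):=h_{ik}\Gamma^{k}_{+j}-\thalf\Xp(h_{ij})\qquad\text{and}\qquad K^{2}(\rho_{ij}):=h_{ik}\Gamma^{k}_{zj}-\thalf\Xz(h_{ij}),
\end{align*}
so that multiplying by $h^{li}$ immediately gives the desired shapes \eqref{eq:christoffel.metric.p} and \eqref{eq:christoffel.metric.z}. The key point is that the $\ast$-symmetry $\rho_{ij}^{\ast}=\rho_{ji}$ is then \emph{forced} by $\Xz$-compatibility: substituting the definition of $\rho$ into the rewritten $\Xz$-equation and using $(K^{n}f)^{\ast}=K^{-n}f^{\ast}$ produces
\begin{align*}
K^{2}(\rho_{ij}-\rho_{ji}^{\ast})=\thalf\Xz(h_{ij})+\thalf K^{4}\paraa{(\Xz h_{ji})^{\ast}}.
\end{align*}
The right-hand side vanishes identically: since $h_{ij}=h_{ji}^{\ast}$, applying the identity $\Xz(f^{\ast})=-K^{4}((\Xz f)^{\ast})$ from \eqref{actonstar} to $f=h_{ji}$ gives $\Xz(h_{ij})+K^{4}((\Xz h_{ji})^{\ast})=0$, so $\rho_{ij}=\rho_{ji}^{\ast}$. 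An analogous substitution into the $\Xm$-condition, this time using $\Xm(f^{\ast})=-K^{2}((\Xp f)^{\ast})$ on $f=h_{ji}$, produces exactly the parametrization \eqref{eq:christoffel.metric.m} of $\Gamma^{i}_{-j}$ in terms of the same $\gamma$; no further constraint on $\gamma$ appears, consistently with the remark that \eqref{eq:metric.comp.p} and \eqref{eq:metric.comp.m} carry the same information.

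The ``if'' direction is obtained by running the same computations backwards: plugging \eqref{eq:christoffel.metric.p}-\eqref{eq:christoffel.metric.z} into \eqref{eq:metric.comp.p}-\eqref{eq:metric.comp.z}, the two conjugation identities from \eqref{actonstar} together with $\rho_{ij}^{\ast}=\rho_{ji}$ produce the cancellations that verify each compatibility equation. I expect the main obstacle to be bookkeeping of how the $K$-action interacts with the $\ast$-operation — in particular the signs introduced by $(Kf)^{\ast}=K^{-1}f^{\ast}$ and by \eqref{actonstar} — but the cancellation that makes the whole statement work is precisely the one already observed in the sample computation motivating Definition~\ref{def:metric.compatibility}, so no genuinely new difficulty arises.
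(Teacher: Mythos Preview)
Your proposal is correct and follows essentially the same route as the paper: both reduce the compatibility conditions to the relations $\Xp(h_{ij})=\Gamma_{+,ij}-K^2(\Gamma_{-,ji}^\ast)$ and $\Xz(h_{ij})=\Gamma_{z,ij}-K^4(\Gamma_{z,ji}^\ast)$, and both solve them using the conjugation identities from \eqref{actonstar}. The only cosmetic difference is ordering: the paper makes an Ansatz for $\Gamma_{-,ij}$ first and derives $\Gamma_{+,ij}$, whereas you define $\gamma$ through $\Gamma_{+,ij}$ and then read off $\Gamma_{-,ij}$ from the $\Xm$-condition --- the computations and cancellations are identical.
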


\begin{proof}
  From the compatibility condition \eqref{eq:metric.comp.p} one obtains
  \begin{align*}
    \Xp(h_{ij}) &= h_{ik}\Gamma^k_{+j} - K^2\paraa{(\Gamma^k_{-i})^\ast h_{kj}}
    =h_{ik}\Gamma^k_{+j} - K^2\paraa{(h_{jk}\Gamma^k_{-i})^\ast}
    = \Gamma_{+,ij} - K^{2}\paraa{\Gamma_{-,ji}^\ast},
  \end{align*}
  with $\Gamma_{a,ij}=h_{ik}\Gamma^k_{aj}$, which can be solved by
  \begin{align*}
    \Gamma_{+,ij} = \Xp(h_{ij})+K^2\paraa{\Gamma_{-,ji}^\ast}.
  \end{align*}
  Writing 
  \begin{align*}
    \Gamma_{-,ij} = \thalf\Xm(h_{ij}) + K(\gamma_{ji}^\ast)\implies
    \Gamma_{-,ji}^\ast = -\thalf K^{-2}\Xp(h_{ij}) + K^{-1}(\gamma_{ij}),
  \end{align*}
  for $\gamma_{ij}\in\Stq$, gives
  \begin{align*}
    \Gamma_{+,ij} = \thalf\Xp(h_{ij}) + K(\gamma_{ij})
  \end{align*}
  providing the general solution to \eqref{eq:metric.comp.p} and
  \eqref{eq:metric.comp.m}. Similarly, the Ansatz
  \begin{align*}
    \Gamma_{z,ij} = \thalf\Xz(h_{ij}) + K^2(\rho_{ij})
  \end{align*}
  solves \eqref{eq:metric.comp.z} if and only if
  $\rho_{ij}^\ast=\rho_{ji}$. Using
  $\Gamma^i_{aj} = h^{ik}\Gamma_{a,kj}$ one arrives at
  \eqref{eq:christoffel.metric.p}--~\eqref{eq:christoffel.metric.z}.
\end{proof}

\subsection{$q$-affine connections on projective modules}

\noindent
As expected, one can construct $q$-affine connections on
projective modules. More precisely, one proves the following result.

\begin{proposition}\label{prop:q.affine.projective}
  Let $M$ be a right $\Stq$-module and let $\nabla$ be a $q$-affine
  connection on $M$. Given a projection on $M$, i.e. an endomorphism
  $p:M\to M$ such that $p^2=p$, then $p\circ\nabla$ is a $q$-affine
  connection on the right $\Stq$-module $p(M)$.
\end{proposition}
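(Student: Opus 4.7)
The plan is to set $\nablat : \TStq \times p(M) \to p(M)$, defined by $\nablat_X m := p(\nabla_X m)$ for $X \in \TStq$ and $m \in p(M)$, and then verify the three axioms in Definition~\ref{def:q.affine.connection}. Before doing so I would record two elementary facts about $p$ that will be used repeatedly. First, since $p$ is a right $\Stq$-module endomorphism, for $m' \in M$ and $f \in \Stq$ one has $p(m')f = p(m'f)$; in particular $p(M)$ is a right $\Stq$-submodule of $M$. Second, $p^2 = p$ implies $p(m) = m$ for every $m \in p(M)$, and also forces $\nablat_X m \in p(M)$ automatically, since $p\paraa{p(\nabla_X m)} = p(\nabla_X m)$.

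Axioms~(\ref{q.affine.lin.module}) and (\ref{q.affine.lin.tangent}) — $\complex$-linearity in $m$ and in $X$ — follow immediately by combining $\complex$-linearity of $p$ with the corresponding linearity properties of $\nabla$. The only non-trivial axiom is the twisted Leibniz rule (\ref{q.affine.Xa.leibniz}). For $m \in p(M)$ and $f \in \Stq$ I would compute
\begin{align*}
  \nablat_{X_a}(mf)
  &= p\paraa{\nabla_{X_a}(mf)}
   = p\paraa{(\nabla_{X_a}m)\sigma_a(f) + m X_a(f)} \\
  &= p(\nabla_{X_a}m)\,\sigma_a(f) + p(m)\,X_a(f)
   = (\nablat_{X_a}m)\,\sigma_a(f) + m X_a(f),
\end{align*}
using the Leibniz rule for $\nabla$ in the first line, the right $\Stq$-linearity of $p$ in the second equality, and finally $p(m) = m$.

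No real obstacle arises: the statement is essentially a formal consequence of the definitions, with the entire content being that right $\Stq$-linearity of the projection lets the twisted Leibniz rule descend to $p(M)$. The idempotency $p^2 = p$ enters only to guarantee that $\nablat_X$ is valued in $p(M)$ and that $p$ acts as the identity on $p(M)$; the particular form of the twist maps $\sigmap,\sigmam,\sigmaz$ plays no role whatsoever. Consequently the same argument will work verbatim in the Hopf-algebra generalisation sketched in the final section of the paper.
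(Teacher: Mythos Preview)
Your proof is correct and follows essentially the same argument as the paper's: both note that linearity in $X$ and $m$ is immediate, then verify the twisted Leibniz rule by the same direct computation using right $\Stq$-linearity of $p$ together with $p(m)=m$ for $m\in p(M)$. Your version is slightly more explicit about why $\nablat$ lands in $p(M)$ and why $p(M)$ is a submodule, but the content is identical.
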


\begin{proof}
  Since $\nabla$ is a $q$-affine connection and $p$ is an endomorphism, it is immediate that
  $\nablat=p\circ\nabla$ satisfies properties
  (\ref{q.affine.lin.module}) and (\ref{q.affine.lin.tangent}) in
  Definition~\ref{def:q.affine.connection}. Moreover, for $m\in p(M)$
 \begin{align*}
\nablat_{X_a}(m f)
    &=p\paraa{\nabla_{X_a}(m f)} = p\paraa{(\nabla_{X_a}m) \, \sigma_a(f) } + 
    p \paraa{ m X_a(f) }\\
    &=(\nablat_{X_a}m) \, \sigma_a(f) + m X_a(f),
  \end{align*}
since $p(m)=m$ when $m\in p(M)$. We conclude that $\nablat$ is a $q$-affine connection on
  $p(M)$.
\end{proof}

\noindent
Since we have shown in the previous section that 
$q$-affine connections exist on free modules,
Proposition~\ref{prop:q.affine.projective} implies that every projective
$\Stq$-module can be equipped with a $q$-affine connection.
Moreover, let $\nabla$ and $\nablat$
be $q$-affine connections on a $\Stq$-module $M$ and define
\begin{align*}
  \alpha(X,m) = \nabla_Xm-\nablat_Xm.
\end{align*}
Then $\alpha:\TStq\times M\to M$ satisfies
\begin{align}
  &\alpha(\lambda X+\mu Y,m_1) = \lambda\alpha(X,m_1)+\mu\alpha(Y,m_1)\label{eq:alpha.prop.1}\\
  & \alpha(X, m_1 f + m_2 g) = \alpha(X,m_1) f + \alpha(X,m_2) g \label{eq:alpha.prop.2}
\end{align}
for $m_1,m_2\in M$, $X,Y\in\TStq$, $f,g\in\Stq$ and
$\lambda,\mu\in\complex$.  Conversely, every $q$-affine connection on
a projective module $M$ can be written as
\begin{align*}
  \nabla_Xm = p(\nzero_X m) + \alpha(X,m).
\end{align*}
where $\nzero$ is the connection defined in \eqref{eq:nzero.def} and
$\alpha:\TStq\times M\to M$ is an arbitrary map satisfying
\eqref{eq:alpha.prop.1} and \eqref{eq:alpha.prop.2}.
Next, let us show that a connection on a projective module is
  compatible with the restricted metric if the projection is
  orthogonal.

\begin{proposition}
  Let $\nabla$ be a $q$-affine connection on the $\Stq$-module
  $M$ and assume furthermore that $\nabla$ is compatible with a
  hermitian form $h$ on $M$. If $p:M\to M$ is an
  orthogonal projection, i.e. $p$ is a projection such that, for all $m_1,m_2\in M$,
  \begin{align*}
    h\paraa{p(m_1),m_2} = h\paraa{m_1,p(m_2)}
  \end{align*}
 then $\nablat=p\circ\nabla$ is a $q$-affine
  connection on $p(M)$ that is compatible with $h$ restricted to $p(M)$.
\end{proposition}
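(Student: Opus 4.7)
The plan is to reduce the metric compatibility of $\nablat = p\circ\nabla$ on $p(M)$ to the metric compatibility of $\nabla$ on $M$ by exploiting the orthogonality hypothesis to ``move'' the projection $p$ from one slot of $h$ to the other.

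First, I would observe that by Proposition~\ref{prop:q.affine.projective} the map $\nablat = p\circ\nabla$ is already a $q$-affine connection on $p(M)$, so nothing remains to be verified on that front. What must be checked is the three identities \eqref{eq:metric.comp.p}--\eqref{eq:metric.comp.z} with $\nabla$ replaced by $\nablat$ and $m_1,m_2\in p(M)$.

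The key observation is that for $m_1,m_2\in p(M)$ one has $p(m_1)=m_1$ and $p(m_2)=m_2$, so by orthogonality of $p$,
\begin{align*}
  h\paraa{m_1,\nablat_{X_a}m_2}
  &= h\paraa{m_1,p(\nabla_{X_a}m_2)} = h\paraa{p(m_1),\nabla_{X_a}m_2} = h\paraa{m_1,\nabla_{X_a}m_2},\\
  h\paraa{\nablat_{X_a}m_1,m_2}
  &= h\paraa{p(\nabla_{X_a}m_1),m_2} = h\paraa{\nabla_{X_a}m_1,p(m_2)} = h\paraa{\nabla_{X_a}m_1,m_2},
\end{align*}
for $a=\pm,z$. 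Substituting these identities into the right-hand sides of \eqref{eq:metric.comp.p}, \eqref{eq:metric.comp.m} and \eqref{eq:metric.comp.z} for $\nablat$ produces exactly the right-hand sides of the corresponding identities for $\nabla$ applied to $m_1,m_2$. Since $\nabla$ is compatible with $h$ on $M$, these are equal to $X_a(h(m_1,m_2))$, and the desired compatibility of $\nablat$ on $p(M)$ follows.

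There is no real obstacle: the argument is a direct unfolding of the definitions, with orthogonality being the only non-trivial ingredient. The only minor point to keep in mind is that the restriction of $h$ to $p(M)\times p(M)$ is still a hermitian form (the three defining properties descend trivially), so the compatibility conditions make sense on $p(M)$ as stated.
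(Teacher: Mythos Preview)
Your proposal is correct and follows essentially the same route as the paper's proof: invoke Proposition~\ref{prop:q.affine.projective} for the connection property, then use orthogonality of $p$ together with $p(m_i)=m_i$ for $m_i\in p(M)$ to replace each $h(\,\cdot\,,\nablat_{X_a}\,\cdot\,)$ and $h(\nablat_{X_a}\,\cdot\,,\,\cdot\,)$ by the corresponding expression in $\nabla$, and finish by the assumed compatibility of $\nabla$. The only cosmetic difference is that the paper carries the whole right-hand side of \eqref{eq:metric.comp.p} through one chain of equalities, whereas you treat the two slots separately first; the content is identical.
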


\begin{proof}
  First of all, it follows from
  Proposition~\ref{prop:q.affine.projective} that
  $\nablat=p\circ\nabla$ is a $q$-affine connection on $p(M)$. Since
  $p$ is an orthogonal projection, one finds that for $m_1,m_2\in p(M)$
  \begin{align*}
    h\paraa{m_1,\nablat_{\Xp}m_2}&-K^2\paraa{h(\nablat_{\Xm}m_1,m_2)}
    = h\paraa{m_1,p(\nabla_{\Xp}m_2)}-K^2\paraa{h(p(\nabla_{\Xm}m_1),m_2)}\\
    &=h\paraa{p(m_1),\nabla_{\Xp}m_2}-K^2\paraa{h(\nabla_{\Xm}m_1,p(m_2))}\\
    &=h\paraa{m_1,\nabla_{\Xp}m_2}-K^2\paraa{h(\nabla_{\Xm}m_1,m_2)}
      = \Xp\paraa{h(m_1,m_2)}
  \end{align*}
  by using that $\nabla$ is compatible with $h$. A similar computation shows that
  \begin{align*}
    \Xz\paraa{h(m_1,m_2)} = h\paraa{m_1,\nablat_{\Xz}m_2}-K^4\paraa{h(\nablat_{\Xz}m_1,m_2)},
  \end{align*}
  from which we conclude that $\nablat$ is compatible with $h$ restricted to $p(M)$.
\end{proof}

\section{A $q$-affine Levi-Civita connection on $\OmegaStq$} \label{se:4}

\noindent
In this section we shall construct a $q$-affine connection on
$\OmegaStq$, compatible with an invertible hermitian form $h$ and satisfying a
certain torsion freeness condition. The module $\OmegaStq$ is a free
$\Stq$-module of rank 3 with basis $\omegap,\omegam,\omegaz$ which
implies that the results of Proposition~\ref{prop:nabla.comp.metric}
may be used.  Although $\OmegaStq$ has a bimodule structure, we shall
only consider the right module structure of $\OmegaStq$ in what
follows. In the case of a $q$-affine connection on $\OmegaStq$, there
is a natural definition of torsion freeness, suggested by the
relations \eqref{eq:Xmp.com}--\eqref{eq:Xzp.com}.

\begin{definition}
  A $q$-affine connection $\nabla$ on $\OmegaStq$ is \emph{torsion free} if
  \begin{align}
    &\nablam\omegap-q^2\nablap\omegam=\omegaz\label{eq:def.torsion.mp}\\
    q^2&\nablaz\omegam-q^{-2}\nablam\omegaz=(1+q^2)\label{eq:def.torsion.zm}\omegam\\
    q^2&\nablap\omegaz-q^{-2}\nablaz\omegap=(1+q^2)\omegap.\label{eq:def.torsion.pz}
  \end{align}
\end{definition}

\noindent
In the following, we will construct a torsion free $q$-affine
connection on $\OmegaStq$ that is compatible with a hermitian form. We
call a connection satisfying these conditions a $q$-affine Levi-Civita
connection. As it turns out, for such connections to exist, the
hermitian form needs to satisfy a compatibility condition.

\begin{proposition}\label{prop:condition.existence.lc}
  Let $h$ be an invertible hermitian form on the (right) $\Stq$-module
  $\OmegaStq$ and write $h_{ab}=h(\omega_a,\omega_b)$. A $q$-affine
  Levi-Civita connection on $\OmegaStq$ exists if and only if
  \begin{align}
  &\Xz(h_{++}-q^2h_{--})
  =K^2\Xm(h_{z+})-q^2\Xm(h_{-z})                    
    -q^2K^2\Xp(h_{z-})+\Xp(h_{+z}).\label{eq:prop.condition.lc.connection}
  \end{align}
\end{proposition}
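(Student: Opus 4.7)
The strategy is to use Proposition~\ref{prop:nabla.comp.metric} to parametrize every metric-compatible $q$-affine connection on $\OmegaStq$ by the data $\{\gamma_{ij},\rho_{ij}\}$ with $\rho_{ij}^\ast=\rho_{ji}$, and then to express the torsion-freeness conditions as a linear system in these parameters. The condition \eqref{eq:prop.condition.lc.connection} will arise as the single integrability constraint for solvability.

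First I would substitute the lowered-index Christoffel symbols from the proof of Proposition~\ref{prop:nabla.comp.metric} into \eqref{eq:def.torsion.mp}--\eqref{eq:def.torsion.pz}, evaluated at the basis indices $i=+,-,z$, producing nine linear equations. Moving the $h$-derivative terms to the right-hand side and applying $K^{-1}$ to strip the outer $K$ and $K^2$ factors from the $\gamma,\rho$ terms, the nine equations take the compact form
\begin{align*}
\Aeq{i},\qquad \Beq{i},\qquad \Ceq{i},\qquad i\in\{+,-,z\},
\end{align*}
with $\rhot_{ij}:=K(\rho_{ij})$, and with $A_i,B_i,C_i\in\Stq$ explicit combinations of $h_{ij}$ and of $\Xp,\Xm,\Xz$ applied to the $h_{ij}$. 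Under $\rho_{ij}^\ast=\rho_{ji}$ the new variables satisfy a twisted hermiticity $\rhot_{ij}^\ast=K^{-2}(\rhot_{ji})$; in particular $\rhot_{++}^\ast=K^{-2}(\rhot_{++})$ and $\rhot_{--}^\ast=K^{-2}(\rhot_{--})$.

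The second step is to solve the system. The variable $\gamma_{-+}$ never appears, and most other unknowns can be used to parametrize solutions freely. The only obstruction comes from the triple $\Aeq{z}$, $\Beq{-}$, $\Ceq{+}$, whose common unknowns are $\gamma_{+z},\gamma_{z-},\rhot_{++},\rhot_{--}$. Using $\Ceq{+}$ and $\Beq{-}$ to eliminate $\gamma_{+z}$ and $\gamma_{z-}$ from the $\ast$-adjoint of $\Aeq{z}$ yields a relation of the form
\begin{align*}
q^{-4}\rhot_{++}-q^{6}\rhot_{--}=A_z^\ast-q^{-2}C_{+}-q^{4}B_{-}.
\end{align*}
Applying $\ast$ to this equation, and using $(K(f))^\ast=K^{-1}(f^\ast)$ together with the twisted hermiticity of $\rhot_{++},\rhot_{--}$, turns the left-hand side into $K^{-2}$ of itself; consistency therefore forces $A_z^\ast-q^{-2}C_{+}-q^{4}B_{-}=K^2\paraa{A_z^\ast-q^{-2}C_{+}-q^{4}B_{-}}^{\ast}$, which is a pure condition on $h$. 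Unpacking $A_z,B_-,C_+$, pushing $\ast$ through $\Xp,\Xm,\Xz$ via \eqref{actonstar} together with $h_{ab}^\ast=h_{ba}$, and using $\Xz=(1-K^4)/(1-q^{-2})$ to combine the resulting $h$ and $K^4(h)$ terms, the residual identity reduces precisely to \eqref{eq:prop.condition.lc.connection}. Conversely, when \eqref{eq:prop.condition.lc.connection} holds the obstruction vanishes, and the remaining equations can be solved to produce explicit Levi-Civita connections.

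The main obstacle is computational rather than conceptual: one must carefully track how $\ast$ interacts with $\Xp,\Xm,\Xz$ (via the $K^{\pm 2}$ and $K^{\pm 4}$ factors in \eqref{actonstar}) and with the $K$'s implicit in $\rhot_{ij}$, and then verify that all these $K$-shifts conspire to assemble cleanly into \eqref{eq:prop.condition.lc.connection} rather than a $K$-twisted variant.
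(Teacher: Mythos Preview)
Your proposal is correct and follows essentially the same route as the paper's proof: parametrize metric-compatible connections via Proposition~\ref{prop:nabla.comp.metric}, reduce torsion-freeness to the nine equations $A_i,B_i,C_i$, observe that only the triple $(A_z,B_-,C_+)$ is constrained, and extract the obstruction as the hermiticity condition on the resulting relation between $\rho_{++}$ and $\rho_{--}$. The only cosmetic difference is that you phrase the constraint via the twisted hermiticity $\rhot_{ii}^\ast=K^{-2}(\rhot_{ii})$ of $\rhot=K(\rho)$, whereas the paper converts back to $\rho$ and imposes ordinary hermiticity $\rho_{++}^\ast=\rho_{++}$; these are equivalent, and the final unpacking to \eqref{eq:prop.condition.lc.connection} is the same computation either way.
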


\begin{proof}
  Assume that $h$ is an invertible hermitian form on $\OmegaStq$
  and write $h_{ab}=h(\omega_a,\omega_b)$ with inverse
  $h^{ab}$. Furthermore, we write $\nabla_a=\nabla_{X_a}$ and 
  \begin{align*}
    \nabla_{a}\omega_b = \omega_c\Gamma_{ab}^c
  \end{align*}
  for $a,b=\pm, z$. In terms of $\Gamma_{a,bc}=h_{bp}\Gamma^p_{ac}$
  the torsion free equations \eqref{eq:def.torsion.mp}--~\eqref{eq:def.torsion.pz} become
  \begin{align}
    &\G{-}{a}{+}-q^2\G{+}{a}{-}=h_{az}\label{eq:torsion.lo.Gamma.mp}\\
    &q^{2}\G{z}{a}{-}-q^{-2}\G{-}{a}{z}=(1+q^2)h_{a-}\label{eq:torsion.lo.Gamma.zm}\\
    &q^{2}\G{+}{a}{z}-q^{-2}\G{z}{a}{+}=(1+q^2)h_{a+}.\label{eq:torsion.lo.Gamma.pz}
  \end{align}
  Since $\OmegaStq$ is a free (right) module, one can apply the
  results of Proposition~\ref{prop:nabla.comp.metric} to obtain
  \begin{align*}
    &\G{+}{a}{b} = \tfrac{1}{2}\Xp(h_{ab})+K(\gamma_{ab})\\
    &\G{-}{a}{b} = \tfrac{1}{2}\Xm(h_{ab})+K(\gamma_{ba}^\ast)\\
    &\G{z}{a}{b} = \tfrac{1}{2}\Xz(h_{ab})+K(\rho_{ab}) ,    
  \end{align*}
for ``parameters" $(\gamma_{ab}, \rho_{ab}=\rho_{ba}^\ast)$ in $\Stq$,
   giving all $q$-affine connections
  compatible with $h$. 

\noindent  
  Inserting the above expressions into
  \eqref{eq:torsion.lo.Gamma.mp}--~\eqref{eq:torsion.lo.Gamma.pz}
  gives
  \begin{align*}
    &\gamma_{+a}^\ast-q^2\gamma_{a-} = K^{-1}(h_{az})
      -\tfrac{1}{2}K^{-1}\Xm(h_{a+})+\tfrac{1}{2}q^2K^{-1}\Xp(h_{a-})\equiv A_a\\
    &q^2K(\rho_{a-})-q^{-2}\gamma_{za}^\ast=K^{-1}\bracketb{(1+q^2)h_{a-}
      -\tfrac{1}{2}q^2\Xz(h_{a-})+\tfrac{1}{2}q^{-2}\Xm(h_{az})}\equiv B_a\\
    &q^2\gamma_{az}-q^{-2}K(\rho_{a+})
    =K^{-1}\bracketb{(1+q^2)h_{a+}-\tfrac{1}{2}q^2\Xp(h_{az})+\tfrac{1}{2}q^{-2}\Xz(h_{a+})}
    \equiv C_a.
  \end{align*}
  Note that the right hand sides $A_a$, $B_a$ and $C_a$ only depend on
  the metric components $h_{ab}$. 
  
  \noindent
  The above 9 equations can be grouped into three independent sets:  
  
  \noindent
  \underline{Group 1}
  \begin{align}
    &\Aeq{+}\tag{G1.1}\\
    &\Aeq{-}\tag{G1.2}
  \end{align}
  
    \noindent
  \underline{Group 2}
  \begin{align}
    &\Beq{+}\tag{G2.1}\\
    &\Beq{z}\tag{G2.2}\\
    &\Ceq{-}\tag{G2.3}\\
    &\Ceq{z}\tag{G2.4}
  \end{align}
  
    \noindent
  \underline{Group 3}
  \begin{align}
    &\Aeq{z}\tag{G3.1}\label{eq:G31}\\
    &\Beq{-}\tag{G3.2}\label{eq:G32}\\
    &\Ceq{+}\tag{G3.3}\label{eq:G33},
  \end{align}
  where, for notational convenience we denoted $\rhot_{ab}=K(\rho_{ab})$.  
  
  \noindent
  The
  equations in Group 1 can be solved as
  \begin{align}
    &\gamma_{++} = A_+^\ast+q^{2}\gamma_{+-}^\ast\label{eq:sol.gpp}\\
    &\gamma_{--} = q^{-2}\gamma_{+-}^\ast-q^{-2}A_-,\label{eq:sol.gmm}
  \end{align}
  and the equations in Group 2 can be solved as
  \begin{align}
    &\gamma_{z+} = q^4\rhot_{+-}^\ast-q^2B_+^\ast\\
    &\rho_{z-} = q^{-2}K^{-1}(B_z)+q^{-4}K^{-1}(\gamma_{zz}^\ast)\\
    &\gamma_{-z} = q^{-2}C_-+q^{-4}\rhot_{-+}\\
    &\rho_{z+} = q^4K^{-1}(\gamma_{zz})-q^2K^{-1}(C_z).\label{eq:sol.rzp}
  \end{align}
  Note that the condition $\rho_{ab}^\ast=\rho_{ba}$ will not pose a
  problem here, since neither $\rho_{-z}$ nor $\rho_{+z}$ appear in
  any other equation, and may simply be defined as
  $\rho_{-z}=\rho_{z-}^\ast$ and $\rho_{+z}=\rho_{z+}^\ast$.
  
  For the equations in Group 3, the fact that we require
  $\rho_{++}^\ast=\rho_{++}$ and $\rho_{--}=\rho_{--}^\ast$ gives a
  non-trivial condition for solutions to exist. From \eqref{eq:G32}
  and \eqref{eq:G33} one obtains
\begin{align}
  &\gamma_{z-} = q^4K^{-2}(\rhot_{--})-q^2B_-^\ast\label{eq:sol.gzm}\\
  &\gamma_{+z} = q^{-2}C_++q^{-4}\rhot_{++},\label{eq:sol.gpz}
\end{align}
and inserted into \eqref{eq:G31} this gives
\begin{align}
  &q^{-4}\rho_{++}-q^6\rho_{--} =
  K(A_z)-q^4K(B_-^\ast)-q^{-2}K(C_+^\ast)\equivalent\notag\\
  &\rho_{++} = q^{10}\rho_{--}+q^4K(A_z)-q^8K(B_-^\ast)-q^2K(C_+^\ast).\label{eq:rhopp.rhomm}
\end{align}
A necessary (and sufficient) condition for solutions to exist, is that
the right hand side of the above equation is hermitian. From
\begin{align*}
  &A_z = K^{-1}\bracketb{h_{zz}
      -\tfrac{1}{2}\Xm(h_{z+})+\tfrac{1}{2}q^2\Xp(h_{z-})}\\
  &B_- = K^{-1}\bracketb{(1+q^2)h_{--}
      -\tfrac{1}{2}q^2\Xz(h_{--})+\tfrac{1}{2}q^{-2}\Xm(h_{-z})}\\
  &C_+ = K^{-1}\bracketb{(1+q^2)h_{++}-\tfrac{1}{2}q^2\Xp(h_{+z})+\tfrac{1}{2}q^{-2}\Xz(h_{++})}
\end{align*}
one obtains
\begin{align*}
  K(A_z) &= h_{zz}
      -\tfrac{1}{2}\Xm(h_{z+})+\tfrac{1}{2}q^2\Xp(h_{z-})\\ ~& \\
  K(B_-^\ast) &= (1+q^2)K^2(h_{--})
                +\tfrac{1}{2}q^2K^{-2}\Xz(h_{--})-\tfrac{1}{2}q^{-2}\Xp(h_{z-})\\
         &= \tfrac{q^2}{2(1-q^{-2})}\paraa{K^2(h_{--})+K^{-2}(h_{--})}
  -\tfrac{q^{-2}}{1-q^{-2}}K^2(h_{--})-\tfrac{1}{2}q^{-2}\Xp(h_{z-})\\ ~& \\
  K(C_+^\ast) &= (1+q^2)K^2(h_{++})+\tfrac{1}{2}q^2\Xm(h_{z+})-\tfrac{1}{2}q^{-2}K^{-2}\Xz(h_{++})\\
         &= -\tfrac{q^{-2}}{2(1-q^{-2})}\paraa{K^2(h_{++})+K^{-2}(h_{++})}+\tfrac{q^2}{1-q^{-2}}K^2(h_{++})
           +\tfrac{1}{2}q^2\Xm(h_{z+})
\end{align*}
by using that $\Xz=(1-K^4)/(1-q^{-2})$. 

\noindent
Since $\rho_{--}$ and
$h_{zz}$, as well as $K^2(h_{--})+K^{-2}(h_{--})$ and
$K^2(h_{++})+K^{-2}(h_{++})$, are hermitian, the non-hermitian terms
of \eqref{eq:rhopp.rhomm}, which we denote by $S$, become
\begin{align*}
  S &= q^6\Xp(h_{z-})-q^4\Xm(h_{z+})+\tfrac{q^{6}}{1-q^{-2}}K^2(h_{--})
  -\tfrac{q^4}{1-q^{-2}}K^2(h_{++}).
\end{align*}

\noindent
Thus, a necessary and sufficient condition for $\rho_{++}$ to be
hermitian is that
\begin{align*}
  0= S-S^\ast
  &= q^6\Xp(h_{z-})+q^6K^{-2}\Xm(h_{-z})
    -q^4\Xm(h_{z+})-q^4K^{-2}\Xp(h_{+z})\\
  &\qquad
    +\tfrac{q^{6}}{1-q^{-2}}K^2(h_{--})-\tfrac{q^{6}}{1-q^{-2}}K^{-2}(h_{--})
    -\tfrac{q^4}{1-q^{-2}}K^2(h_{++})+\tfrac{q^4}{1-q^{-2}}K^{-2}(h_{++})\\
  &= q^6\Xp(h_{z-})+q^6K^{-2}\Xm(h_{-z})
    -q^4\Xm(h_{z+})-q^4K^{-2}\Xp(h_{+z})\\
  &\qquad
    +K^{-2}\Xz\paraa{q^4h_{++}-q^6h_{--}}
\end{align*}
which is equivalent to \eqref{eq:prop.condition.lc.connection}. Hence,
assuming the above relation to hold true, a solution to the torsion
free equations, which is also compatible with $h$, is given by
\eqref{eq:sol.gpp}--\eqref{eq:rhopp.rhomm}. The free parameters in
this solution are
$\gamma_{+-}, \, \gamma_{-+}, \, \gamma_{zz}, \, \rho_{+-}$ and
$\rho_{--}^\ast=\rho_{--}, \, \rho_{++}^\ast=\rho_{++}$.  
\end{proof}

\noindent
Although the general $q$-affine Levi-Civita connection on $\OmegaStq$
may be written down, the expressions are rather lengthy and not
particularly illuminating. However, let us explicitly write down a
Levi-Civita connection in the particular case of a diagonal metric of
the form
\begin{align*}
  &h_{--} = h, \qquad
  h_{++}=q^2h, \qquad
    h_{zz}=h_z, \qquad h_{ab}=0\text{ if }a\neq b,
\end{align*}
with $h$ and $h_z$  invertible elements of $\Stq$; note that the
above choice clearly satisfies \eqref{eq:prop.condition.lc.connection}
in Proposition~\ref{prop:condition.existence.lc}. Using the solution
given by \eqref{eq:sol.gpp}--\eqref{eq:rhopp.rhomm}
in the proof of
Proposition~\ref{eq:prop.condition.lc.connection} one finds that
\begin{align*}
  &\nabla_{\Xp}\omega_a = \omega_bh^{bc}\paraa{\thalf\Xp(h_{ca})+K(\gamma_{ca})}\\
  &\nabla_{\Xm}\omega_a = \omega_bh^{bc}\paraa{\thalf\Xm(h_{ca})+K(\gamma_{ac}^\ast)}\\
  &\nabla_{\Xz}\omega_a= \omega_bh^{bc}\paraa{\thalf\Xz(h_{ca})+K^2(\rho_{ca})}
\end{align*}
with 
\begin{align*}
  &\gamma_{++} = \tfrac{1}{2}q^2K^{-1}\Xp(h)+q^2\gamma_{+-}^\ast\\
  &\gamma_{--} = -\tfrac{1}{2}K^{-1}\Xp(h)+q^{-2}\gamma_{+-}^\ast\\
  &\gamma_{+z} = (1+q^2)K^{-1}(h)+\tfrac{1}{2}q^{-2}K^{-1}\Xz(h)\\
  &\gamma_{z+} = q^4K^{-1}(\rho_{-+})\\
  &\gamma_{-z} = q^{-4}K(\rho_{-+})\\
  &\gamma_{z-} =q^4K^{-1}(\rho_{--})-q^2(1+q^2)K^{-1}(h)-\tfrac{1}{2}q^4K^{-3}\Xz(h)
\end{align*}
and
\begin{align*}
  &\rho_{z+} = q^4K^{-1}(\gamma_{zz})+q^4K^{-2}\Xp(h_z)\\
  &\rho_{+z} = \rho_{z+}^\ast = q^4K(\gamma_{zz}^\ast)-q^4\Xm(h_z)\\
  &\rho_{z-} = \tfrac{1}{2}q^{-4}K^{-2}\Xm(h_z)+q^{-4}K^{-1}(\gamma_{zz}^\ast)\\
  &\rho_{-z} = \rho_{z-}^\ast = -\tfrac{1}{2}q^{-4}\Xp(h_z)+q^{-4}K(\gamma_{zz})\\
  &\rho_{++} = q^{10}\rho_{--}+q^4h_z
    -\tfrac{1}{2}q^4(1+q^2)(1+q^4)\paraa{K^2(h)+K^{-2}(h)}.
\end{align*}
Furthermore, setting $\gamma_{+-}=\rho_{-+}=\gamma_{zz}= \rho_{--}=0$ one obtains
\begin{align*}
  &\nabla_+\omega_+ = \omega_+h^{-1}\Xp(h)\\
  &\nabla_+\omega_- = \omega_z\frac{h_z^{-1}}{1-q^{-2}}
    \parab{\paraa{1-\tfrac{1}{2}q^4}K^2(h)-\tfrac{1}{2}q^4K^{-2}(h)}\\
  &\nabla_+\omega_z =
    \omega_+q^{-2}h^{-1}\parab{K^2(h_z)+\tfrac{1}{1-q^{-2}}\paraa{(q^2-\tfrac{1}{2}q^6)h-\tfrac{1}{2}q^6K^4(h)}}
    +\omega_z\tfrac{1}{2}h_z^{-1}\Xp(h_z)\\
  &\nabla_-\omega_+ = \omega_z
  +\omega_z\frac{h_z^{-1}}{1-q^{-2}}\parab{\paraa{q^2-\tfrac{1}{2}q^6}K^2(h)-\tfrac{1}{2}q^6K^{-2}(h)}\\
  &\nabla_-\omega_- = \omega_-h^{-1}X_-(h)\\
  &\nabla_-\omega_z =
    \omega_-\frac{h^{-1}}{1-q^{-2}}\parab{(1-\tfrac{1}{2}q^4)h-\tfrac{1}{2}q^4K^4(h)}
    +\omega_z\tfrac{1}{2}h_z^{-1}\Xm(h_z)\\
  &\nabla_z\omega_+ =
    \omega_z\thalf q^4h_z^{-1}\Xp(h_z)+\omega_+q^2h^{-1}K^2(h_z)
    +\omega_+\frac{h^{-1}}{1-q^{-2}}\parab{(1-\tfrac{1}{2}q^8)h-\tfrac{1}{2}q^8K^4(h)}\\
  &\nabla_z\omega_- = \omega_z\tfrac{1}{2}h_z^{-1}q^{-4}\Xm(h_z)
  +\omega_-\frac{h^{-1}}{1-q^{-2}}\paraa{\tfrac{1}{2}h-\tfrac{1}{2}K^4(h)}\\
  &\nabla_z\omega_z = \omega_z\tfrac{1}{2}h_z^{-1}\Xz(h_z)
    -\omega_+\tfrac{1}{2}q^2h^{-1}K^2\Xm(h_z)
    -\omega_-\tfrac{1}{2}q^{-4}h^{-1}K^2\Xp(h_z)  
\end{align*}
giving a $q$-affine Levi-Civita connection on $\OmegaStq$ with respect to the hermitian form $h$.

\section{The quantum 2-sphere}

\noindent
The noncommutative (standard) Podle\'s sphere $S^2_q$
\cite{p:quantum.spheres} can be considered as a subalgebra of $\Stq$
by identifying the generators $\Bz,\Bp,\Bm$ of $\Stwoq$ as
\begin{align*}
  \Bz = cc^\ast\qquad
  \Bp = ca^\ast\qquad
  \Bm = ac^\ast = \Bp^\ast ,
\end{align*}
satisfying then the relations
\begin{alignat*}{2}
\Bm\,\Bz &= q^{2}\, \Bz\,\Bm   &\qquad &\Bp\,\Bz = q^{-2}\, \Bz\,\Bp    \\ 
\Bm\,\Bp &= q^2\, \Bz \,\big( \mid - q^2\, \Bz \big)  &\qquad &\Bp\,\Bm= \Bz \,\big( \mid - \Bz \big) . 
\end{alignat*}
These elements generate the fix-point algebra of the right $U(1)$-action
\begin{align}\label{actu1}
  \alpha_z(a) =az\qquad\alpha_z(a^\ast)=a^\ast\bar{z}\qquad
  \alpha_z(c) =cz\qquad\alpha_z(c^\ast)=c^\ast\bar{z}
\end{align}
for $z\in U(1)$ and $a\in\Stq$, related to the $U(1)$-Hopf-fibration $\Stwoq\hookrightarrow \Stq$. 
Equivalently, the sphere $\Stwoq$ is the invariant subalgebra of 
$\Stq$ for the left action of $K$: $\Stwoq = \{f \in \Stq \, , \, K \rtr f = f \}$.
Then, the left action of the $X_a$ does not preserve the algebra $\Stwoq$ 
(since their left action does not commute with that of $K$):  one readily computes,
\begin{alignat*}{3}
  &\Xp \rtr \Bz = q a^\ast c^\ast &\quad
  &\Xm \rtr \Bz = -q^{-1} ca &\quad
  &\Xz \rtr \Bz = 0\\
  &\Xp \rtr \Bp = q (a^\ast)^2 &\quad
  &\Xm \rtr \Bp = c^2 &\quad
  &\Xz \rtr \Bp = 0\\
  &\Xp \rtr \Bm = q^{2} (c^\ast)^2 &\quad
  &\Xm \rtr \Bm = -q^{-1} (a)^2 &\quad
  &\Xz \rtr \Bm = 0.
\end{alignat*}

\noindent
On the other hand, the right action of $X_a$ does preserve the algebra $\Stwoq$
(since their right action does commute with the left one of $K$). 
Let us denote $Y_a=X_a$ for the right action. Then, it is easy to check that
\begin{alignat*}{3}
  &\Bz\ltr\Yp = q^{-1}\Bm &\quad
  &\Bz\ltr\Ym = -q^{-1}\Bp &\quad
  &\Bz\ltr\Yz = 0\\
  &\Bp\ltr\Yp = q\mid -q(1+q^2)\Bz &
  &\Bp\ltr\Ym = 0 &
  &\Bp\ltr\Yz = -q^2(1+q^2)\Bp\\
  &\Bm\ltr\Yp = 0 &
  &\hspace{-15mm}\Bm\ltr\Ym = -q^{-1}\mid + q^{-1}(1+q^2)\Bz &
  &\Bm\ltr\Yz = (1+q^{-2})\Bm.
\end{alignat*}
Note that when restricted to $\Stwoq$ the $Y_a$ are not independent. 
A long but straightforward computation shows that they are indeed related as
\begin{equation}\label{rel-rvf}
  \begin{split}
    \big( (f \ltr \Yp) \Bp \, q & + (f \ltr \Ym)  \Bm \, q^{-1} \big)(1+q^2) +
    (f \ltr \Yz) \para{ 1-2 \frac{1+q^2}{1+q^4} \Bz}\\
    & = (f \ltr \Yz^2) \, q^{-2} \para{ \frac{1-q^2}{1+q^4} \, ( 2 q^4 + q^2 + 1 ) \Bz - (1-q^6) \Bz^2} \\
    &\quad+ (f \ltr K^4) \, q^{-2} (1+q^2) \paraa{ (q^4 -1) \Bz + (1-q^6) \Bz^2},
  \end{split}
\end{equation}
for $f\in\Stwoq$.  This can be checked on a vector space basis
  for the algebra $\Stwoq$, a basis which can be taken as $X(m) (\Bz)^n$ for
  $m\in \integers$, $n \in\naturals$ with $X(m) = (\Bp)^m$ for
  $m\geq 0$ and $X(m) = (\Bm)^{-m}$ for $m < 0$ (cf. \cite{mnw1991}).

\subsection{A left covariant calculus on $\Stwoq$} 
Since the element $K$ acts (on the left) as the identity on $\Stwoq$, the differential
\eqref{df3} when restricted to $f\in\Stwoq$ becomes
\begin{align}\label{diff-s2l}
d f  =  (\Xm \rtr f) \,\omega_{-} + (\Xp \rtr f) \,\omega_{+} .
\end{align}
 In particular one finds
\begin{align*}
d\Bp &=  q \, (a^\ast)^{2} \, \omegap + c^{2} \, \omegam ,  \\
d\Bm &=  - q^{2}\, (c^\ast)^{2} \, \omegap - q^{-1} \, a^{2} \, \omegam,  \\
d\Bz & =  c^\ast a^\ast\, \omegap -q^{-1} c a  \, \omegam 
\end{align*}
which can be inverted to yield
\begin{align*}
\omegap & = q^{-1} a^2 \, d\Bp -q^2 c^2 \, d\Bm + (1+q^2) ac \, d\Bz  \\
\omegam & = (c^\ast)^{2} \, d\Bp - q (a^\ast)^{2} \, d\Bm - (1+q^2) c^\ast a^\ast \, d\Bz,
\end{align*}
implying that the differential in \eqref{diff-s2l} can be expressed as
\begin{align}\label{diff-s2lbis}
d f & = \paraa{ q^{-1} (\Xp \rtr f) \, a^2 + (\Xm \rtr f) \, (c^\ast)^{2}}d\Bp \nn \\
& \qquad - 
\paraa{ q^2 (\Xp \rtr f) \, c^2 + q (\Xm \rtr f) \, (a^\ast)^{2}} d\Bm \\ 
& \qquad + (1+q^2)\paraa{ (\Xp \rtr f) \, ac - (\Xm \rtr f) \, c^\ast a^\ast}d\Bz .  \nn 
\end{align}
Note that $\Xpm \rtr f \notin \Stwoq$. However, from the commutation relations $K \Xpm = q^{\mp} \Xmp K$ one infer that all coefficients are in $\Stwoq$. For instance: $K \lt \paraa{(\Xp \rtr f) \, a^2} = \paraa{(K \Xp \rtr f) \, K \lt a^2} = 
( q \Xp \rtr f) \, q^{-1} \lt a^2 = (\Xp \rtr f) \, a^2$, and similarly for the other terms. 

\noindent
The form \eqref{diff-s2l} of the differential that uses left invariant vector fields and forms 
can be seen as identifying the cotangent bundle of $S^2$ with the direct sum of the line bundles of `charge' $\pm 2$, that is   
$\Omega^{1}(S^2) \simeq  \L_{-2} \omega_{-} \oplus \L_{+2} \omega_{+}$. This identification can be used also for the quantum 
sphere $\Stwoq$ with the line bundles defined as in \eqref{linbun} below. 

\noindent
One the other hand, from the expression in \eqref{diff-s2lbis}
one writes the differential $d$ on $\Stwoq$ 
in terms of the right acting operators $Y_a$.
\begin{lemma}
For $f\in\Stwoq$, the differential in \eqref{diff-s2l} can be written as
\begin{equation}\label{exd-q}
d f = (f \ltr V_+) \, d\Bp + (f\ltr V_-) \,\, d\Bm + (f \ltr V_0) \, d\Bz  
\end{equation}
where
\begin{align*}
V_+ & = \Yp\paraa{ 1 - q^{-2}(1+q^2) \Bz} q^{-1} - \Yz \,\, \Bm  \frac{q^{-2}(1+q^6)}{1+q^4} 
+ \Yz^2 \,\, \Bm\frac{1-q^2}{(1+q^2)(1+q^4)} 
\\~~\\
V_- & = - \Ym\paraa{ 1 - q^2(1+q^2) \Bz} q + \Yz \,\, \Bp  \frac{q^{-2}(1+q^6)}{1+q^4} 
- \Yz^2 \,\, \Bp \frac{1-q^2}{(1+q^2)(1+q^4)}
\\~~\\
V_0 & = \big( \Yp \,\, \Bp \, q^{-1} -  \Ym \,\, \Bm \, q \big) (1+q^2) + \Yz \,\, \Bz \frac{(1-q^4)(1+q^6)}{1+q^4}   
- \Yz^2 \,\, \Bz \frac{1-q^2}{1+q^4} .
\end{align*}
\end{lemma}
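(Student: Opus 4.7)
The plan is to start from \eqref{diff-s2lbis}, in which the coefficients of $d\Bp, d\Bm, d\Bz$ are written as left actions $\Xpm \rtr f$ multiplied on the right by elements of $\Stq$, and to convert these coefficients into right actions of $\Uqsu$-elements on $f$ multiplied by elements of $\Stwoq$. The bridge between left and right actions for $f \in \Stwoq$ is provided by the coproducts
\begin{align*}
\Delta(\Xp) = \Xp \otimes K^2 + \mid \otimes \Xp, \qquad \Delta(\Xm) = \Xm \otimes K^2 + \mid \otimes \Xm, \qquad \Delta(\Xz) = \Xz \otimes K^4 + \mid \otimes \Xz,
\end{align*}
together with the defining property $K \rtr f = f$, which in particular lets one freely insert $K$-factors when acting on invariant $f$. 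These are the ingredients that already made it possible, in the discussion preceding the lemma, to argue that the naively non-invariant terms $(\Xp \rtr f)\, a^2$ and $(\Xm \rtr f)\, (c^\ast)^2$ in fact lie in $\Stwoq$.

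The three identities I need to establish are
\begin{align*}
f \ltr V_+ &= q^{-1}(\Xp \rtr f)\, a^2 + (\Xm \rtr f)\, (c^\ast)^2,\\
f \ltr V_- &= -q^2(\Xp \rtr f)\, c^2 - q(\Xm \rtr f)\, (a^\ast)^2,\\
f \ltr V_0 &= (1+q^2)\paraa{(\Xp \rtr f)\, ac - (\Xm \rtr f)\, c^\ast a^\ast}.
\end{align*}
Both sides define linear maps $\Stwoq \to \Stwoq$, so it suffices to check equality on a basis. I would first verify the three identities on the generators $\Bp, \Bm, \Bz$ directly, using the explicit values of $\Xpm \rtr a^{\pm 1}, c^{\pm 1}$ recalled in Section~\ref{sec:left.cov.calculus} on the one hand, and the actions of $\Yp, \Ym, \Yz$ on $\Bp, \Bm, \Bz$ displayed in the present section on the other. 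The extension to the basis $\{X(m)(\Bz)^n\}$ of $\Stwoq$ (\cite{mnw1991}) is then carried out by induction, using that $d$ is a derivation on the left-hand side, and the twisted Leibniz rules for $\Yp, \Ym, \Yz$ (together with the corresponding second-order rule for $\Yz^2$) on the right-hand side.

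The main obstacle is the presence of $\Yz^2$ in $V_+, V_-, V_0$: it does not satisfy a first-order twisted Leibniz rule, and the induction step therefore produces cross-terms of the form $(f \ltr \Yz)(g \ltr \Yz)$ scaled by powers of $q$. Matching these against the expansion of $d(fg)$ requires repeated use of the relation \eqref{rel-rvf}, which constrains the restrictions of $\Yp, \Ym, \Yz, \Yz^2$ to $\Stwoq$. For the same reason $V_+, V_-, V_0$ are not uniquely determined as abstract elements of the algebra generated by the $\Yp, \Ym, \Yz$ and multiplications by $\Bp, \Bm, \Bz$: any representative differing from the stated ones by a multiple of the operator underlying \eqref{rel-rvf} defines the same operation on $\Stwoq$. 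The particular choice of $V_+, V_-, V_0$ given in the lemma is the one for which the resulting coefficients of $d\Bp, d\Bm, d\Bz$ reproduce \eqref{diff-s2lbis} term by term.
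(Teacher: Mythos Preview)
Your plan coincides with the paper's: both reduce the lemma to checking the three coefficient identities on the vector space basis $X(m)(\Bz)^n$ of $\Stwoq$. The paper's proof is literally one sentence --- ``by acting on the vector space basis $X(m)(\Bz)^n$ one explicitly checks the equality via a tedious but straightforward computation'' --- i.e.\ a brute-force evaluation of both sides on each basis element using the closed formulas for the left and right actions on monomials.

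Where your write-up differs is in proposing to organise this check as an \emph{induction} from the generators rather than a direct evaluation. That is where the argument becomes soft. You correctly note that the $\Yz^2$ term in the $V_a$ prevents the right-hand side from being a first-order twisted derivation, so the identity does not propagate from $f$ and $g$ to $fg$ by a simple Leibniz argument. Your suggestion that the resulting cross-terms are absorbed by repeated use of \eqref{rel-rvf} is not obvious: \eqref{rel-rvf} is a scalar identity relating $f\ltr\Yp,\,f\ltr\Ym,\,f\ltr\Yz,\,f\ltr\Yz^2,\,f\ltr K^4$, whereas the induction discrepancy is an $\Omega^1$-valued expression that also involves the bimodule commutation of $d\Bp,d\Bm,d\Bz$ with elements of $\Stwoq$. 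You would need to spell out exactly how \eqref{rel-rvf} closes this step, and as stated it is not clear that it does. The paper sidesteps this entirely by computing both sides directly on $X(m)(\Bz)^n$ --- no induction, no derivation property required --- which is the safer route here and what you should fall back on if the inductive bookkeeping does not close cleanly.
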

\begin{proof}
By acting on the vector space basis $X(m)(\Bz)^n$ (as introduced previously), one explicitly checks the equality of \eqref{diff-s2l} and \eqref{exd-q} via a tedious but straightforward computation.
\end{proof}

\begin{remark}
  When $q=1$ the derivative \eqref{exd-q} reduces to
  \begin{equation}\label{classdf}
    \begin{split}
      d f &= 2 \paraa{ ( f \ltr \Yp) \,\, \Bp  - (f \ltr \Ym) \,\, \Bm} d\Bz \\
      & \quad + \paraa{ ( f \ltr \Yp)  \,\, (1-2 \Bz) - ( f \ltr \Yz)  \,\, \Bm } d\Bp \\
      & \quad + \paraa{ - ( f \ltr \Ym) \,\, (1-2 \Bz) +  (f \ltr \Yz) \,\, \Bp} d\Bm \, .       
    \end{split}
  \end{equation}
Classically, the vector field $X_a$ are the left invariant vector fields on $S^3 = SU(2)$ 
with dual left invariant forms $\omega_a$. Thus they do not project to vector fields on the base space $S^2$  
with commuting coordinates $(\Bp, \Bm, \Bz)$ and relation $\Bp \Bm = \Bz (1-\Bz)$: 
$X_a \rtr f $ is not a function on $S^2$ even when $f$ is. On the other hand, the vector fields
$Y_a$ are the right invariant vector fields on $SU(2)$ and thus they project to vector fields on $S^2$, where they are not independent any longer and are related by  
\begin{align*} 
2 ( \Bp \Yp + \Bm \Ym ) + (1-2 \Bz) \Yz = 0 ,
\end{align*}
which is just the relation to which \eqref{rel-rvf} reduces when $q=1$.

By changing coordinates $B_0 = \tfrac{1}{2}(1-x)$ so that the radius condition for $S^2$ is written as $r^2 = 4 \Bp \Bm + x^2$, 
the exterior derivative operator in \eqref{classdf} becomes
\begin{align*} 
d f = \partial_x f \, d x + \partial_+ f \, d \Bp + \partial_- f \, d \Bm 
- (\Delta f) \, ( x\, dx + 2 \Bm \, d\Bp + 2 \Bp \, d\Bm ) 
\end{align*}
where $\Delta = x \, \partial_x + \Bp \, \partial_+ + \Bm \, \partial_-$ is the Euler (dilatation) vector field. One then computes 
$d r^2 = 2 ( 1 - r^2 ) ( x\, dx + 2 \Bm \, d\Bp + 2 \Bp \, d \Bm )$, which vanishes when restricting to $S^2$: $r^2-1=0$. 
\end{remark}

\subsection{Connections on projective modules over $\Stwoq$}
In this section, we construct $q$-affine connections on a class of
projective modules over $\Stwoq$.  The definition of $q$-affine
connections applies equally well to the subalgebra $\Stwoq$, and since
$a\ltr Y\in\Stwoq$ for $a\in\Stwoq$ and $Y\in\Uqsu$, we will in the
following consider the right action of $\Uqsu$ on $\Stwoq$.

The quantum Peter--Weyl theorem for $\Stq$ results into an explicit
(vector space) decomposition of the algebra $\Stq$, that is
$\Stq = \oplus_{n\in\integers}\L_n$, with
\begin{align} \label{linbun}
  \L_n = \{f\in\Stq:\alpha_z(f) = \bar{z}^nf\},
\end{align}
for the $U(1)$ action $\alpha_z$ in \eqref{actu1}. Equivalently,
$\L_n = \{f \in \Stq \, , \, K \rtr f = q^{-\frac{n}{2}}f \}$.  It
follows that $\L_0 = \Stwoq$, as well as $\L_n\L_m\subseteq\L_{n+m}$.
Clearly, the right action of $\Uqsu$ leaves each $\L_n$ invariant.
Moreover, it is easy to see that $\L_n$ is a $\Stwoq$-bimodule.
For $f, g \in \Stwoq$ and $\psi_n\in\L_n$,
\begin{align*}
 \alpha_z(f \psi_n g) = \alpha_z(f) \alpha_z(\psi_n) \alpha_z(g) = \bar{z}^n (f \psi_n g),
\end{align*}
which implies that $\L_n$ is a $\Stwoq$-bimodule.

\noindent
As a right (or equivalently left) module, each $\L_n$ can be realised as a
finitely generated projective $\Stwoq$-module as we now briefly recall (cf. \cite{bm:line.bundles.quantum.spheres,hm:projective.monopole,l:twisted.sigma.model}).

For $n\geq 0$ and $\mu=0,1,\ldots,n$, let
$(\Psi_n)_\mu,(\Phi_n)_\mu\in\Stq$ be given as
\begin{align*}
  (\Phi_n)_\mu = \sqrt{\alpha_{n\mu}}c^{n-\mu}a^{\mu} \qquad
  \quad
 (\Psi_n)_\mu = \sqrt{\beta_{n\mu}}(c^\ast)^{\mu}(a^\ast)^{n-\mu}
\end{align*}
with
\begin{align*}
  \alpha_{n\mu}=\prod_{k=0}^{n-\mu-1}\frac{1-q^{2(n-k)}}{1-q^{2(k+1)}}\qquad\qquad
  \beta_{n\mu}=q^{2\mu}\prod_{k=0}^{\mu-1}\frac{1-q^{-2(n-k)}}{1-q^{-2(k+1)}}.
\end{align*}
It is straight-forward to check that
\begin{align*}
  \sum_{\mu=0}^{n}(\Phi_n)_{\mu}^\ast(\Phi_n)_{\mu}=
  \sum_{\mu=0}^{n}(\Psi_n)_{\mu}^\ast(\Psi_n)_{\mu}= \mid,
\end{align*}
implying that
\begin{align*}
  &{(p_{n})^{\mu}}_{\nu} = (\Psi_n)_\mu(\Psi_n)^\ast_{\nu}
  =\sqrt{\beta_{n\mu}\beta_{n\nu}}(c^\ast)^{\mu}(a^\ast)^{n-\mu}a^{n-\nu}c^{\nu}\\
  &{(p_{-n})^{\mu}}_{\nu} = (\Phi_n)_\mu(\Phi_n)^\ast_{\nu}
  =\sqrt{\alpha_{n\mu}\alpha_{n\nu}}c^{n-\mu}a^\mu(a^\ast)^\nu (c^\ast)^{n-\nu}
\end{align*}
satisfy $p_n^2=p_n$ and $p_{-n}^2=p_{-n}$. Moreover, it is easy to see that
the entries ${(p_{n})^{\mu}}_{\nu}$ and ${(p_{-n})^{\mu}}_{\nu}\in\Stwoq$, which implies that
one has finitely generated projective $\Stwoq$-modules
\begin{align*}
  M_n =
  \begin{cases}
    p_n(\Stwoq)^{n+1}&\text{if }n\geq 0\\
    p_{-|n|}(\Stwoq)^{|n|+1}&\text{if }n<0 \, .
  \end{cases}
\end{align*}
 These modules $M_n$ are isomorphic as right $\Stwoq$-modules to $\L_n$ for each $n\in\integers$.

Now, let $\{e_\mu\}_{\mu=0}^n$ be a basis of $(\Stwoq)^{n+1}$. Given
an invertible hermitian form $h$ on $(S^2_q)^{n+1}$,
Proposition~\ref{prop:nabla.comp.metric} gives a $q$-affine connection
on $(\Stwoq)^{n+1}$ compatible with $h$ as
\begin{align*}
  &\nablat_{\Xp}e_\mu = e_{\nu}\Gamma^\nu_{+\mu} =
    e_{\nu}h^{\nu\rho}\paraa{\thalf h_{\rho\mu}\ltr\Yp+a_{\rho\mu}\ltr K}\\
  &\nablat_{\Xm}e_\mu = e_{\nu}\Gamma^\nu_{-\mu}
    = e_{\nu}h^{\nu\rho}\paraa{\thalf h_{\rho\mu}\ltr\Ym+a_{\mu\rho}^\ast\ltr K}\\
  &\nablat_{\Xz}e_{\mu} = e_\nu\Gamma^\nu_{z\mu}
    = e_\nu h^{\nu\rho}\paraa{\thalf h_{\rho\mu}\ltr\Yz+b_{\rho\mu}\ltr K^2}.  
\end{align*}
for arbitrary $a_{\mu\nu},b_{\mu\nu}\in\Stwoq$ such that $b_{\mu\nu}^\ast=b_{\nu\mu}$.

\noindent
If $n\geq 0$ then $\eh_\mu=e_{\nu}{(p_n)^\nu_\mu}$ are generators of
$M_n=p_n(\Stwoq)^{n+1}$ and Proposition~\ref{prop:q.affine.projective}
implies that $\nabla=p_n\circ\nablat$ is a $q$-affine connection on
$M_n$ with
\begin{align*}
  \nabla_{\Xp}\eh_{\mu}
  &= p_n\paraa{\nablat_{\Xp}e_\nu{(p_n)^\nu}_{\mu}}
    = p_n(\nablat_{\Xp}e_\nu)\paraa{{(p_n)^\nu}_{\mu}\ltr K^2}+\eh_{\nu}\paraa{{(p_n)^\nu}_{\mu}\ltr\Xp}\\
  &= \eh_{\gamma}h^{\gamma\rho}q^{2(\mu-\nu)}\paraa{\thalf h_{\rho\nu}\ltr\Xp+a_{\rho\nu}\ltr K}{(p_n)^\nu}_{\mu}
    +\eh_{\nu}\paraa{{(p_n)^\nu}_{\mu}\ltr\Xp}\\
  \nabla_{\Xm}\eh_{\mu}
    &= \eh_{\gamma}h^{\gamma\rho}q^{2(\mu-\nu)}\paraa{\thalf h_{\rho\nu}\ltr\Xm+a^\ast_{\nu\rho}\ltr K}{(p_n)^\nu}_{\mu}
    +\eh_{\nu}\paraa{{(p_n)^\nu}_{\mu}\ltr\Xm}\\
  \nabla_{\Xz}\eh_{\mu}
    &= \eh_{\gamma}h^{\gamma\rho}q^{4(\mu-\nu)}\paraa{\thalf h_{\rho\nu}\ltr\Xz+b_{\rho\nu}\ltr K^2}{(p_n)^\nu}_{\mu}
    +\eh_{\nu}\paraa{{(p_n)^\nu}_{\mu}\ltr\Xz}
\end{align*}
using that
${(p_n)^\mu}_{\nu}\ltr K=q^{\nu-\mu}{(p_n)^\mu}_{\nu}$. Moreover, if
$p_n$ is orthogonal with respect to $h$, then $\nabla$ is compatible
with the restriction of $h$ to $M_n$.
A similar construction goes for $n<0$.  

\section{Further comments: sketching a generalization}

\noindent
As final section of comments we sketch a way to generalise (some of) the constructions above for 
any Hopf algebra with a left covariant differential calculus and corresponding quantum tangent space
\cite{w:qts89}. While referring to \cite[14.1]{KS97} for details, we recall that a first order differential calculus 
$(\Gamma, d)$ over the Hopf algebra $(H, \Delta, S, \varepsilon)$ is called left-covariant if there is a linear map $\Delta_\Gamma : \Gamma \to H \otimes \Gamma$ such that, for all $a,b \in H$ it holds that 
$$
\Delta_\Gamma(f \, dg) = \Delta(f) ({\rm id} \ot d)) \Delta(g) .
$$
An element $\rho \in \Gamma$ is called left-invariant if $\Delta_\Gamma(\rho) = 1 \ot \rho$ and we let  
${}_{\rm inv}\Gamma$ denote the vector space of invariant elements. 
There is then a corresponding quantum tangent space $T_\Gamma \subset H^{\circ}$ (the dual Hopf algebra) with a unique bilinear form 
$\angles{\cdot , \cdot} : T_\Gamma \times \Gamma \to \complex$ such that $T_\Gamma$ 
$$
\angles{X, f \, d g} = \varepsilon(f) X (g) ,  
$$
for $g, f \in H$, and $X\in T_\Gamma$. The vector spaces ${}_{\rm inv}\Gamma$ and $T_\Gamma$ 
form a non-degenerate dual pair with respect to this bilinear form. Also, the pairing can be written as 
a left action as in \eqref{actions}, 
\begin{align*}
  X \lt f = \one{f}\angles{h,\two{f}}
\end{align*}
for $X\in T_\Gamma$ and $f\in H$. Furthermore, one has dual bases $\{X_a, \, a = 1, 2, \dots , n \}$ of $T_\Gamma$ and 
$\{\omega_a, \, a = 1, 2, \dots , n\}$ of ${}_{\rm inv}\Gamma$ 
and a family of functionals $\{\sigma^a_b, \, a, b = 1, 2, \dots , n \}$ such that 
\begin{align}
d f & = \sum\nolimits_a (X_a \lt f ) \, \omega_a \notag\\
X_a \lt (f g ) & = f X_a \lt (g ) + X_b \lt (f) \, \sigma^b_a \lt (g) \label{xacts} .
\end{align} 
In $H^{\circ}$ we have 
\begin{align*}
\Delta{\sigma^a_b} & = \sigma^a_c \ot \sigma^c_b \, , \qquad S(X_a) = - X_b S(\sigma^b_a) \, .
\end{align*}
With compatible $*$-structures, using the second relation and \eqref{staractions} one readily computes:
\begin{align}\label{xacts*}
X_a \lt f^* = - \sigma^b_a \lt (X^\dagger_b \lt f)^* .
\end{align}  

By way of illustration let us consider the trivial right module $M=H$ with hermitian form 
$h(m_1, m_2) = m_1^* m_2$. The analogue of the condition \eqref{q.affine.Xa.leibniz} 
in Definition~\ref{def:q.affine.connection} is read from \eqref{xacts} as
\begin{align}\label{nablagen} 
\nabla_{X_a} \lt (m f) & = m \, X_a \lt (f) + \paraa{\nabla_{X_b} \lt (m)} \, \sigma^b_a \lt (f) .  
\end{align}
In turn, the compatibility with the hermitian form reads:
\begin{align}\label{xhcomp}
X_a\paraa{h(m_1,m_2)}
      = h\paraa{m_1,\nabla_{\Xp}m_2} - \sigma^b_a \lt \paraa{h(\nabla_{X^\dagger_b}m_1,m_2)} .
\end{align}
Indeed, using \eqref{xacts} and \eqref{xacts*}, we compute 
\begin{align*}
X_a\paraa{h(m_1,m_2)} & = X_a \lt(m_1^* m_2)  \\ 
&= X_a \lt ( m_1^* m_2 ) = m_1^* X_a \lt (m_2) +  X_b \lt (m_1^*) \, \sigma^b_a \lt (m_2) \\
&= X_a \lt ( m_1^* m_2 ) = m_1^* X_a \lt (m_2)  
- \sigma^c_b \lt (X^\dagger_c \lt m_1)^*\, \sigma^b_a \lt (m_2) \\
 &= X_a \lt ( m_1^* m_2 ) = m_1^* X_a \lt (m_2)  
- \sigma^b_a \lt \paraa{X^\dagger_b \lt m_1)^* \, (m_2)} 
\end{align*}
from which \eqref{xhcomp} follows. 

Equations \eqref{nablagen} and \eqref{xhcomp} can be the starting
point for a theory of affine connections on a quantum group with a
quantum tangent space. 
For a torsion freeness condition one would need 
(twisted) commutation relations among the elements of $T_\Gamma$. 
In general these commutation relations could be involved; in particular they do not need to be quadratic as in the classical case or in the example in \eqref{eq:Xzp.com}--\eqref{eq:Xmp.com}. Details should await a different time.

\bigskip
\noindent
{\bf Acknowledgements:} 
The paper is partially supported by INFN-Trieste. 
GL is supported by INFN, Iniziativa Specifica GAST,
by INDAM - GNSAGA and by the INDAM-CNRS IRL-LYSM. 
JA is supported by grant 2017-03710 from the
Swedish Research Council.  
 Furthermore, JA would like to thank the Department of Mathematics 
 and Geosciences, University of Trieste for hospitality.

\bibliographystyle{alpha}

\end{document}